\newtheorem{thm}{Theorem}[section]
\newtheorem{cor}[thm]{Corollary}
\newtheorem{lem}[thm]{Lemma}
\newtheorem{prop}[thm]{Proposition}
\newdefinition{defn}{Definition}
\newdefinition{rmk}{Remark}
\newdefinition{examp}{Example}
\journal{}
\begin{document}
\begin{frontmatter}
\title{On Levi extensions of nilpotent Lie algebras}%
\author[ur]{Pilar Benito}%
\ead{pilar.benito@unirioja.es}
\author[ur]{Daniel de-la-Concepci\'on}
\ead{daniel-de-la.concepcion@alum.unirioja.es}
\address[ur]{Dpto. Matem\'aticas y Computaci\'on, Universidad de La Rioja, 26004, Logro\~no, Spain}%
\begin{abstract}
Levi's theorem decomposes any arbitrary Lie algebra over a field of characteristic zero, as a direct sum of a semisimple Lie algebra (named Levi factor) and its solvable radical. Given a solvable Lie algebra $R$, a semisimple Lie algebra $S$ is said to be a Levi extension of $R$ in case a Lie structure can be defined on the vector space $S\oplus R$. The assertion is equivalent to $\rho(S)\subseteq \mathrm{Der}(R)$, where $\mathrm{Der}(R)$ is the derivation algebra of $R$, for some representation $\rho$ of $S$ onto $R$. Our goal in this paper, is to present some general structure results on nilpotent Lie algebras admitting Levi extensions based on free nilpotent Lie algebras and modules of semisimple Lie algebras. In low nilpotent index a complete classification will be given. The results are based on linear algebra methods and leads to computational algorithms
\end{abstract}
\begin{keyword}
 Lie algebra \sep Levi factor \sep nilpotent algebra, free nilpotent algebra. representation
 \MSC[2010] 17B10 \sep 17B30
\end{keyword}
\end{frontmatter}

\section{Introduction}
Given a finite-dimensional Lie algebra $L$ of characteristic zero with solvable radical $R$, Levi's theorem \cite[Chapter III, Section 9]{Ja62} asserts that there exists a semisimple subalgebra $S$ of $L$ such that $L=S\oplus R$. Note that if $[x,y]$ denotes the product on $L$:
\begin{itemize}
\item[$\bullet$] $[S,S]\subseteq S$ ($S$ is a subalgebra of $L$);
\item[$\bullet$] $[L,R]\subseteq Nil(L)\subseteq R$, $N=Nil(L)$ the nilradical of $L$ (see \cite[Chapter I, Section 7]{Ja62});
\item[$\bullet$] $R$ is an $S$-module via the adjoint representation of the subalgebra $S$ in $R$, $\rho=ad\,S\mid_R$:
$$
\rho:S\to \mathfrak{gl}(R), \rho(s)(a)=ad\,s(a)=[s,a]
$$(the assertion is also valid for the nilradical $N$.)

\end{itemize}
The algebra $L$ will be call \emph{faithful} if the adjoint representation $\rho$ is faithful. This condition is equivalent to the fact that  $L$ contains no nonzero  semisimple ideals.

We note that $\rho(S)$ is a subalgebra of the derivation Lie algebra $Der\, R\leq \mathfrak{gl}(R)$, (a linear map $\delta:R\to R$ is a derivation iff  $\delta(ab)=\delta(a)b+a\delta(b)$ where $ab$ denotes the product in $R$). Moreover, the restriction of each $\delta\in \rho(S)$ to the nilradical $N$, provides a subalgebra of the derivation algebra $Der\, N$.

In this way, representations of semisimple Lie algebras over solvable ones appear as a tool in the natural problem of determining all the finite-dimensional Lie algebras. In fact, the Lie algebras with abelian radical ($R^2=0$) are nothing else but \emph{split null extensions} $L=S\oplus V$ where $S$ is  a semisimple Lie algebra and $V$ an arbitrary S-module according to the definition of Lie module (see \cite[Chapter I, Section 5]{Ja62}). The multiplication on the extension algebra $L=S\oplus V$ is given, for $u,v\in V$ and $s\in S$, by:
\begin{itemize}
\item[$\circ$] $[u,v]=0$;
\item[$\circ$]$[s,v]=-[v,s]=\rho(s)(v)=s\cdot v$, where $\rho:S\to \mathfrak{gl}(V)$ denotes the representation of $S$ onto $V$;
\end{itemize}
and $S$ is viewed as a Lie subalgebra of $L$. We also note that $V$ is an abelian ideal. If $V$ is a trivial module, the center in the null extension algebra $L=S\oplus V$ is just $Z(L)=V$. This situation provides a trivial decomposition and leads the following definition:

\begin{defn}A Lie algebra is said to be \emph{directly decomposable} (\emph{indecomposable}) in case it can (cannot) be decomposed into a direct sum of ideals.
\end{defn}
In characteristic zero, any indecomposable Lie algebra is a faithful algebra; the converse is not true in general. On the other hand, any Lie algebra $L$ with radical $R$ can be decomposed into the direct sum (as ideals) of a  semisimple Lie algebra and a faithful Lie algebra with the same radical. In case $L$ be decomposable, $L$ splits into a direct sum of ideals which are indecomposable as Lie algebras, and the radical $R$ is just the sum of the radicals of the ideals in the decomposition.

In a more general setting, in order to get a Lie algebra by gluing a semisimple algebra $S$ and a solvable one $R$, the following conditions are needed:
\begin{itemize}
\item[$\circ$] A representation $\rho:S\to \mathfrak{gl}(R)$; then $\rho([x,y])=[\rho(x),\rho(y)]$ and so, for $x,y\in S,a\in R$ we have:
$$
\rho([x,y])(a)=\rho(x)(\rho(y)(a))-\rho(y)(\rho(x)(a)),
$$
where $[x,y]$ is the product in $S$. Denoting by $\rho(x)(a)=x\cdot a$, the previous equation can be rewritten as:
$$
[x,y]\cdot a=x\cdot(y\cdot a)-y\cdot(x\cdot a).
$$
\item[$\circ$] $\rho(S)\subseteq Der\, R$.
\end{itemize}

The previous conditions completely the general construction of Lie algebras by gluing solvable and semisimple Lie algebras as the following result shows:

\begin{lem}\label{lem11} Given a semisimple Lie algebra $S$ and a solvable one $R$, the vector space, $L=S\oplus R$ admits a Lie algebra structure with solvable radical $R$ and $S$ as subalgebra if and only if there exists a representation $\rho:S\to \mathfrak{gl}(R)$ such that  $\rho(S)\subseteq Der\, R$. 
\end{lem}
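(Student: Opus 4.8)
\emph{Proof proposal.} The plan is to treat this as the standard Lie‑algebra extension (semidirect‑sum) construction, checking the Jacobi identity case by case.

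For the ``only if'' direction I would start from a Lie structure on $L=S\oplus R$ in which $R$ is the solvable radical and $S$ is a subalgebra. Since $R$ is an ideal we have $[S,R]\subseteq R$, so $\rho(s)(a):=[s,a]$ defines a linear map $\rho(s):R\to R$ for each $s\in S$. The Jacobi identity $[[s,t],a]=[s,[t,a]]-[t,[s,a]]$ says precisely that $\rho([s,t])=[\rho(s),\rho(t)]$, i.e. that $\rho:S\to\mathfrak{gl}(R)$ is a representation; and the Jacobi identity $[s,[a,b]]=[[s,a],b]+[a,[s,b]]$ says that each $\rho(s)$ is a derivation of $R$. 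Hence $\rho(S)\subseteq Der\, R$.

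For the converse, given a representation $\rho:S\to\mathfrak{gl}(R)$ with $\rho(S)\subseteq Der\, R$, I would equip the vector space $L=S\oplus R$ with the bilinear, antisymmetric bracket that restricts to the given brackets on $S$ and on $R$ and satisfies $[s,a]=-[a,s]=\rho(s)(a)$ for $s\in S$, $a\in R$; explicitly,
\begin{equation*}
[s_1+a_1,\,s_2+a_2]=[s_1,s_2]+\bigl(\rho(s_1)(a_2)-\rho(s_2)(a_1)+[a_1,a_2]\bigr).
\end{equation*}
The one thing to verify is the Jacobi identity on triples from $L$, and by multilinearity it suffices to test triples of elements each lying in $S$ or in $R$. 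This splits into four cases: all three in $S$ (Jacobi in $S$); two in $S$ and one in $R$ (this is exactly the statement that $\rho$ is a representation); one in $S$ and two in $R$ (this is exactly the statement that each $\rho(s)$ is a derivation of $R$); and all three in $R$ (Jacobi in $R$). So the bracket is a Lie bracket, $R$ is an ideal and $S$ is a subalgebra by construction.

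It remains to see that $R$ is the \emph{solvable radical} of $L$. By construction $R$ is a solvable ideal, and $L/R\cong S$ is semisimple; the image of any solvable ideal of $L$ under $L\to L/R$ is a solvable ideal of $S$, hence zero, so every solvable ideal of $L$ is contained in $R$. Thus $R$ is the maximal solvable ideal, as required. I do not expect a genuine obstacle here: the only slightly laborious point is the bookkeeping in the mixed cases of the Jacobi verification, and even that is mechanical once one observes that the ``representation'' and ``derivation'' hypotheses are literally the two nontrivial instances of Jacobi.
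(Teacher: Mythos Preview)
Your proof is correct and follows the same semidirect-sum approach as the paper: the paper's proof is very terse (it refers back to introductory remarks for the forward direction and declares the Jacobi verification ``a straightforward computation'' for the converse), whereas you spell out the four-case Jacobi analysis and, helpfully, include the explicit verification that $R$ is actually the solvable radical of $L$, a point the paper leaves implicit. There is nothing substantively different in the strategies.
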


\begin{proof}
 The direct is clear from the previous introductory comments. Assume then there exists a representation $\rho$ such that $\rho(S)\subseteq Der\, R$ and denote $\rho(s)(a)=s\cdot a$. The product in the vector space $L=S\oplus R$ is given by the corresponding multiplications on $S$ and $R$ viewed as subalgebras of $L$ and, for $s\in S, a\in R$ $[sa]=-[sa]=s\cdot a$ provides the desired Lie algebra structure on $L$ (the Jacobi identity is a straightforward computation from the hypothesis). 
\end{proof}

This is the natural starting point in \cite{OnKh75} to examine the problem of classifying finite dimensional Lie algebras with a given radical. The algebra obtained by gluing $S$ and $R$ through the representation $\rho$ is denoted by $S\oplus_\rho R$. In \cite{OnKh75}, for an arbitrary solvable Lie algebra $R$, the authors take a Levi subalgebra $S_0$ of $Der\, R$ and consider the faithful Lie algebra $L_0=S_0\oplus_{id} R$. The algebra $L_0$ is called \emph{universal Lie algebra with radical $R$} and, up to isomorphisms, $L_0$ does not depend on the choice of $S_0$. In fact, from Propositions 3 and 4 in \cite{OnKh75}:

\begin{thm}\label{OnKh75}\emph{[Onishchick-Khakimdzhanov, 1975]} Any faithful Lie algebra with radical $R$ is isomorphic to a subalgebra of the universal algebra $L_0$ of the form $S\oplus_{id} R$ where $S$ is a semisimple subalgebra of $S_0$. Moreover, given $S_1,S_2$ semisimple subalgebras of $S_0$, the subalgebras $S_1\oplus_{id} R$ and $S_2\oplus_{id} R$ are isomorphic if and only if there exists an automorphism $\varphi$ or $R$ such that $\varphi S_1\varphi^{-1}=S_2$.
\end{thm}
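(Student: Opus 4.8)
The plan is to combine Lemma~\ref{lem11} with Malcev's theorem on the conjugacy of Levi (maximal semisimple) subalgebras, keeping track of which inner automorphisms are realized by genuine automorphisms of $R$. For the first assertion, let $L$ be a faithful Lie algebra with radical $R$ and fix a Levi decomposition $L=\tilde S\oplus R$ with associated representation $\rho\colon\tilde S\to\mathfrak{gl}(R)$, $\rho(s)(a)=[s,a]$. Faithfulness forces $\rho$ to be injective, and by the introductory remarks $\rho(\tilde S)\subseteq\mathrm{Der}\,R$; hence $S'=\rho(\tilde S)$ is a semisimple subalgebra of $\mathrm{Der}\,R$. The map $s+a\mapsto\rho(s)+a$ is a Lie isomorphism $L\to S'\oplus_{id}R$ (check it on the three types of brackets, using that in any $\oplus_{id}$-algebra the mixed bracket $[\mu,a]$ equals $\mu(a)$), so we may replace $L$ by $S'\oplus_{id}R$, and it remains to move $S'$ inside the fixed Levi subalgebra $S_0$ of $\mathrm{Der}\,R$ by an automorphism of $R$.

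By Malcev's theorem $S'$ is contained in some Levi subalgebra $\tilde S_0$ of $\mathrm{Der}\,R$, and all such subalgebras are conjugate under automorphisms $\exp(\ad\,\delta)$ with $\delta$ in the nilradical of $\mathrm{Der}\,R$; pick $\delta$ with $\exp(\ad\,\delta)(\tilde S_0)=S_0$, so that $\exp(\ad\,\delta)(S')\subseteq S_0$. The one delicate point is that the nilradical of $\mathrm{Der}\,R$ acts on $R$ by nilpotent operators (it is the Lie algebra of the unipotent radical of the algebraic group $\mathrm{Aut}(R)$), so $\delta$ is a nilpotent operator on $R$, $\varphi:=\exp(\delta)$ belongs to $\mathrm{Aut}(R)$, and conjugation by $\varphi$ on $\mathrm{Der}\,R$ coincides with $\exp(\ad\,\delta)$. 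Then $F\colon S'\oplus_{id}R\to L_0=S_0\oplus_{id}R$, $F(\mu+a)=\varphi\mu\varphi^{-1}+\varphi(a)$, is a Lie homomorphism (routine check on the three bracket types, using $\varphi\in\mathrm{Aut}(R)$) which is injective because $\varphi$ is bijective and $S_0\cap R=0$; hence $L\cong F(L)=S\oplus_{id}R$ with $S=\varphi S'\varphi^{-1}\subseteq S_0$ semisimple. This delicate point --- that the Malcev conjugation of Levi subalgebras of $\mathrm{Der}\,R$ can be carried out by an automorphism of $R$ --- is the step I expect to be the main obstacle; everything else is conjugacy of Levi subalgebras plus bracket bookkeeping in the $\oplus_{id}$ construction.

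For the second assertion, if $\varphi\in\mathrm{Aut}(R)$ satisfies $\varphi S_1\varphi^{-1}=S_2$ then $\mu+a\mapsto\varphi\mu\varphi^{-1}+\varphi(a)$ is, by the same computation, a Lie isomorphism $S_1\oplus_{id}R\to S_2\oplus_{id}R$. Conversely, let $\Psi\colon L_1:=S_1\oplus_{id}R\to L_2:=S_2\oplus_{id}R$ be a Lie isomorphism. Since $R=\mathrm{Rad}(L_i)$ and isomorphisms preserve radicals, $\Psi(R)=R$; hence $\Psi(S_1)$ is a Levi subalgebra of $L_2$, and by Malcev's theorem there is $r$ in the nilradical of $L_2$ --- which is contained in $R$ --- with $\theta:=\exp(\ad_{L_2}\,r)$ carrying $\Psi(S_1)$ onto $S_2$. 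As $r\in R$, the operator $\ad_{L_2}\,r$ is nilpotent and stabilizes $R$, so $\theta\in\mathrm{Aut}(L_2)$ with $\theta(R)=R$; setting $\Psi':=\theta\circ\Psi$ and $\varphi:=\Psi'|_R\in\mathrm{Aut}(R)$, we get $\Psi'(S_1)=S_2$ and $\Psi'(R)=R$. Applying $\Psi'$ to the identity $[\delta,a]_{L_1}=\delta(a)$ for $\delta\in S_1$, $a\in R$, and using $[\Psi'(\delta),\varphi(a)]_{L_2}=\Psi'(\delta)(\varphi(a))$, yields $\varphi\circ\delta=\Psi'(\delta)\circ\varphi$, i.e.\ $\Psi'(\delta)=\varphi\delta\varphi^{-1}$ for all $\delta\in S_1$; therefore $S_2=\Psi'(S_1)=\varphi S_1\varphi^{-1}$, which completes the plan.
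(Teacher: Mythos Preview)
The paper does not actually prove this theorem; it quotes it as Propositions~3 and~4 of \cite{OnKh75}. So there is no ``paper's own proof'' to compare against, and your write-up is a genuine attempt to supply one. The overall architecture---realize $L$ as $S'\oplus_{id}R$ with $S'\subseteq\mathrm{Der}\,R$ via the faithful adjoint action, then conjugate $S'$ into the fixed Levi factor $S_0$ by an automorphism of $R$, and handle the isomorphism criterion by Malcev inside $L_2$---is exactly the right strategy, and your treatment of the second assertion is clean and correct.

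The gap is precisely where you flagged it. Your justification that ``the nilradical of $\mathrm{Der}\,R$ acts on $R$ by nilpotent operators (it is the Lie algebra of the unipotent radical of $\mathrm{Aut}(R)$)'' is false as stated: take $R$ abelian, so $\mathrm{Der}\,R=\gl(R)$, whose nilradical is the line $k\cdot\Id_R$, and $\Id_R$ is certainly not nilpotent on $R$; nor is $k\cdot\Id_R$ the Lie algebra of the (trivial) unipotent radical of $GL(R)$. The fix is to use the sharper form of the Malcev--Harish-Chandra theorem (e.g.\ Bourbaki, \emph{Lie}, I.\S6.8, Th.~5) in which the conjugating element $\delta$ may be chosen in the \emph{nilpotent radical} $[\mathrm{Der}\,R,\mathrm{Rad}(\mathrm{Der}\,R)]$ rather than merely in the nilradical. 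That ideal is the intersection of the kernels of all irreducible representations of $\mathrm{Der}\,R$, hence acts by nilpotent operators in every finite-dimensional module, in particular on $R$; then $\varphi=\exp(\delta)\in\mathrm{Aut}(R)$ and your identity $\varphi\mu\varphi^{-1}=\exp(\ad\delta)(\mu)$ goes through. Alternatively one can argue at the group level via Mostow's conjugacy of maximal reductive subgroups of $\mathrm{Aut}(R)$, which yields $\varphi\in\mathrm{Aut}(R)$ directly. Either route closes the gap; the rest of your argument stands.
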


So, the classification problem for Lie algebras with solvable radical $R$ is solved by knowing the automorphisms and Levi factors of the derivations of $R$. A different approach is given in \cite{Sn10} and \cite{Tu92}, where some necessary conditions on the radical structure of an indecomposable (non solvable) Lie algebra are displayed. The conditions follows from Levi's theorem and representation theory of Lie algebras. As an application, in \cite{Tu92} all the real Lie algebras of dimension $\leq 9$ that admit a nontrivial Levi decomposition are found. The classification involves only the simple real Lie algebras $\mathfrak{so}_3(k)$ and $\mathfrak{sl}_2(k)$ as Levi factors. The ideas in these papers lead to the next definition:

\begin{defn}\label{leviextension}Given a solvable Lie algebra $R$, a semisimple Lie algebra $S$ is said to be a \emph{Levi extension} of $R$ if there exists a representation $\rho:S\to \mathfrak{gl}(R)$ such that $\rho(S)\subseteq Der\, R$. Moreover, the Levi extension $S$ will be say \emph{indecomposable, faithful or trivial Levi extension} if the corresponding Lie algebra $S\oplus_{\rho} R$ is indecomposable, and $\rho$ is a faithful or a trivial representation. 
\end{defn}

Our goal in the present paper is to study the structure of nilpotent Lie algebras admitting a Levi extension. We will consider the case when the Lie algebra obtained by gluing a nilpotent Lie algebra and a Levi extension is faithful. A good explanation of the interest of the subject is given in \cite{Sn10} and references therein. The nilpotent condition is not very restrictive because of \cite[Theorem 2.2 ]{Tu92} (see Proposition \ref{propTurkowski} in this paper) and the fact that the classification of solvable Lie algebras can be reduced to the classification of nilpotent Lie algebras according to \cite{Mal50}.

The paper splits into three sections apart from the Introduction. In Section 2 some basic definitions and generic results on the radical structure of Lie algebras admitting Levi extensions are reviewed; following the ideas in \cite{OnKh75}, a general construction will be given by gluing characteristically nilpotent Lie algebras and Heisenberg Lie algebras. Free nilpotent Lie algebras are introduced in Section 3 as a main tool in the study of nilpotent Lie algebras that admit Levi extension(s). The former algebras let us characterize the last ones by using representation theory of semisimple Lie algebras. Section 4 is devoted to the classification of 3-step nilpotent  ($N^3=0$), also known in the literature as metabelian algebras, and 4-step nilpotent ($N^4=0$) Lie algebras. Our classification results provide a constructive way of getting a huge variety of examples of nilpotent algebras admitting any simple Lie algebra as Levi extension(s). Most part of the results in the paper are based on linear algebra methods and leads to computational algorithms included in \cite{BeCo12}.

Throughout the paper, all the vector spaces and algebraic objects are finite-dimensional and considered over a field $k$ of characteristic zero. Standard definitions, notations and terminology can be found in \cite{Hu72} and \cite{Ja62}.


\section{Basic structure and preliminary examples}

Let $N$ be a finite-dimensional nilpotent Lie algebra, $Der\, N$ and $Aut\, N$ its derivation
algebra and automorphism group respectively. We will denote the product in $N$ as $[a,b]$ and $[U,V]$ denotes the linear span of the set $\{[u,v]:u\in U, v\in V\}$. From the definition of nilpotent algebra, the \emph{lower central series} (l.c.s. for short)$$N,\  N^2=[N,N],\  N^3=[N,N^2],\  \dots,\  N^j=[N,N^{j+1}],\  \dots$$of $N$ ends at $0$ and the largest $t$ for which $N^t\neq0$ is called the \emph{nilpotency index of $N$} ($t$-nilindex for short); in this case, we will say that $N$ is a \emph{$t$-nilpotent Lie algebra}, so $N^{t+1}=0$ or $N$ is \emph{$(t+1)$-step nilpotent}. The \emph{type of $N$} is just the dimension of $N/N^2$. Along the paper, $Z(N)$ will denote the center of $N$; for nilpotent algebras the center is a nontrivial ideal and it is invariant under any derivation of $N$, i.e., the center is a \emph{characteristic ideal}, as terms $N^j$ in the l.c.s. 

A \emph{torus} on $N$ is a commutative subalgebra of $Der\, N$ which consists of semi-simple endomorphisms. Following \cite[Lemma 2.8]{Sa82}, over fields of characteristic zero, all maximal (for the
inclusion) tori on $N$ have the same dimension. This dimensional invariant is called the \emph{rank of $N$} and it is well known that $rank\, N\leq type\, N$.

From Levi's Theorem, any Lie algebra splits as the direct sum of its maximal solvable ideal, the \emph{solvable radical}, and a semisimple Lie algebra. Recall that nilpotent Lie algebras are a special subclass of solvable Lie algebras and that a solvable Lie algebra contains a unique maximal nilpotent ideal named \emph{nilradical}. As a consequence of Lemma \ref{lem11}, we get:

\begin{cor}\label{corlem11}A solvable Lie algebra of characteristic zero admits a nontrivial Levi extension if and only if its derivation Lie algebra is not solvable. \hfill $\square$
\end{cor}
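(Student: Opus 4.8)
The plan is to show the two directions separately, both reducing to Lemma~\ref{lem11}. For the forward direction, suppose $R$ admits a nontrivial Levi extension $S$; by Definition~\ref{leviextension} there is a representation $\rho:S\to\mathfrak{gl}(R)$ with $\rho(S)\subseteq Der\,R$, and since $S$ is semisimple and the Levi extension is nontrivial we may take $\rho$ to be nonzero on at least one simple ideal. I would argue that $\rho(S)$ is then a nonzero semisimple subalgebra of $Der\,R$: indeed, $\rho(S)$ is a homomorphic image of a semisimple Lie algebra, hence semisimple, and it is nonzero because $\rho$ is nontrivial. A Lie algebra containing a nonzero semisimple subalgebra cannot be solvable (a solvable algebra has all its subalgebras solvable, and the only Lie algebra that is both semisimple and solvable is $0$). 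Hence $Der\,R$ is not solvable.

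For the converse, suppose $Der\,R$ is not solvable. Then by Levi's theorem applied to $Der\,R$ itself, its solvable radical is a proper ideal and there is a nonzero semisimple Levi subalgebra $S_0\leq Der\,R$. Taking $S:=S_0$ together with the inclusion representation $\rho=\mathrm{id}:S_0\hookrightarrow\mathfrak{gl}(R)$, we trivially have $\rho(S)=S_0\subseteq Der\,R$, so by Definition~\ref{leviextension}, $S_0$ is a Levi extension of $R$; and it is nontrivial because $\rho$ is the (injective, nonzero) inclusion, so the representation is certainly not the zero map. Equivalently, one may invoke Lemma~\ref{lem11} directly: the conditions $S_0$ semisimple, $R$ solvable, and $\rho(S_0)\subseteq Der\,R$ are exactly what is needed to put a Lie structure on $S_0\oplus R$ with radical $R$ and $S_0$ as a subalgebra, and this gluing is nontrivial.

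I do not anticipate a genuine obstacle here — the statement is essentially a repackaging of Lemma~\ref{lem11} combined with the elementary fact that a Lie algebra is non-solvable precisely when it contains a nonzero semisimple subalgebra (which in turn is the existence of a nonzero Levi factor). The only point requiring a word of care is the word \emph{nontrivial} in the statement: one must check that the representation produced in the converse direction (the inclusion of a Levi factor of $Der\,R$) is not the trivial representation, which is immediate since a nonzero inclusion map is not identically zero, and conversely that in the forward direction a nontrivial Levi extension really does force $\rho(S)\neq 0$, which is exactly what ``nontrivial'' means in Definition~\ref{leviextension}. Thus the proof is two short paragraphs once Lemma~\ref{lem11} and Levi's theorem are in hand.
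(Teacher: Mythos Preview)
Your argument is correct and is precisely the reasoning the paper leaves implicit: the corollary is marked with a $\square$ and no proof, since both directions are immediate from Lemma~\ref{lem11} together with Levi's theorem applied to $Der\,R$. You have simply spelled out those details, including the only small point worth noting (that ``nontrivial'' means $\rho\neq 0$, so $\rho(S)$ is a nonzero semisimple subalgebra of $Der\,R$ in one direction, and that a nonzero Levi factor of $Der\,R$ gives a nontrivial inclusion in the other).
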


Moreover, from complete reducibility of representations of semisimple Lie algebras, we easily arrive at Theorems 2.1 and 2.2 in \cite{Tu92}:

\begin{prop}\label{propTurkowski}\emph{[Turkowski, 1992]} Let $L=S\oplus R$ a Levi decomposition for the Lie algebra $L$ and denote by $\rho$ the adjoint representation of the Levi factor $S$ onto the solvable radical $R$. Then, $R=N\oplus T$ where $N$ is the nilradical of $L$ and $T$ is a trivial module. In particular, if $\rho$ does not provide trivial modules, the radical $R$ is nilpotent. Moreover, if $\rho$ is irreducible, $R^2=0$ and therefore $L$ is a split null extension of $S$ and $R$ via $\rho$.
\end{prop}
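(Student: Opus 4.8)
The plan is to combine Weyl's complete reducibility theorem for the semisimple algebra $S$ acting on $R$ via $\rho$ with the standard fact, already recalled in the Introduction, that $[L,R]\subseteq Nil(L)$.

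First I would fix the module-theoretic setup. Since $N=Nil(L)$ is the maximal nilpotent ideal it is a characteristic ideal of $L$, so $[S,N]\subseteq N$, and of course $N\subseteq R$; hence $N$ is an $S$-submodule of $R$ under $\rho$. By complete reducibility of finite-dimensional representations of $S$ I would pick an $S$-module complement $T$, so that $R=N\oplus T$ as $S$-modules. It then remains to identify $T$ as a trivial module, and for this I would use that for $s\in S$, $a\in R$ one has $\rho(s)(a)=[s,a]\in[S,R]\subseteq[L,R]\subseteq Nil(L)=N$; thus $S$ acts as zero on $R/N\cong T$, i.e. $T$ is a trivial $S$-module. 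The ``in particular'' clause is then immediate: if the decomposition of $R$ into irreducibles affords no trivial constituent, then $R$ has no nonzero $S$-invariant vector, so the trivial submodule $T$ must vanish and $R=N$ is nilpotent.

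For the last assertion I would split the irreducible case in two. If $\rho$ is irreducible and trivial, then $\dim R=1$ and $R^2=0$ is automatic. If $\rho$ is irreducible and nontrivial, then the trivial submodule $T$ is $0$, so by the previous step $R=N$ is nilpotent; moreover $R^2=[R,R]$ is stable under every derivation of $R$, hence under $\rho(S)\subseteq Der\, R$, so $R^2$ is an $S$-submodule of the irreducible module $R$. Therefore $R^2=0$ or $R^2=R$, and the latter is impossible for a nonzero nilpotent algebra since it would make the lower central series constant. Hence $R^2=0$, and with $[R,R]=0$ the bracket on $L=S\oplus_\rho R$ is exactly that of the split null extension of $S$ and $R$ via $\rho$ described in the Introduction.

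The argument is short and there is no single hard step; the point that needs a little care is that $N$ need not be a complement of the subspace of $S$-invariants of $R$ — $N$ may itself contain trivial summands — so $T$ is merely \emph{some} $S$-module complement of $N$, and its triviality must be read off from the quotient $R/N$ together with $[L,R]\subseteq Nil(L)$, rather than from any identification of $T$ with the invariants. One should also not overlook the degenerate one-dimensional subcase when $\rho$ is irreducible and trivial.
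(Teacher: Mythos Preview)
Your proof is correct and follows essentially the same route as the paper's: complete reducibility together with $[L,R]\subseteq N$ and $N$ characteristic for the first two claims, and the fact that $R^2$ is a characteristic ideal for the last. The only minor difference is that the paper invokes the solvability of $R$ (rather than nilpotency) to rule out $R^2=R$, which handles the trivial and nontrivial irreducible cases uniformly and spares you the case split.
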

\begin{proof}The first assertion follows from $[L,R]\subseteq N$ and the fact that $N$ is a characteristic ideal and implies immediately the second one. For the third assertion use that $R^2$ is a characteristic ideal and the solvability of $R$.
\end{proof}

On the other hand, for nilpotent Lie algebras that admit Levi extension(s), in \cite[Theorem 2]{Sn10} we find these basic structural conditions:

  \begin{prop}\label{basiconLevi}\emph{[Snobl, 2010]} Let $N$ be a $(t+1)$-step nilpotent Lie algebra of characteristic zero, $t\geq 2$ and $S$ be a Levi extension of $N$ with related representation $\rho$. Assume that the Lie algebra $L=S\oplus_\rho N$ is directly indecomposable. Then, $\rho$ is a faithful representation and there exists a decomposition of $N$ into a direct sum of $S$-modules 
$$
N=m_1\oplus m_2\oplus \dots \oplus m_t
$$
which holds the following properties ($2\leq j \leq t$):
\begin{itemize}
\item [i)] $m_1$ is an $S$-module that generates $N$ as subalgebra; in particular, $\rho\mid{m_1}$ is a faithful representation;
\item [ii)] $N^j=m_j\oplus N^{j+1}$ with  $m_j\subseteq [m_1,m_{j-1}]$;
\item[iii)] a module $m_j$ decomposes into a sum of some subset of irreducible components of the tensor representation $m_1\otimes m_{j-1}$; moreover, if $m_{j-1}=kx$ is of dimension $1$, the irreducible components of $m_j$ are among those of $m_1$.
\end{itemize}
  \end{prop}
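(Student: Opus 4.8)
The plan is to deduce the whole statement from two soft ingredients: Weyl's complete reducibility theorem for the semisimple algebra $S$, and the elementary behaviour of the lower central series under the action of derivations. Nothing beyond linear algebra and representation theory of $S$ will be needed.

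First I would settle the faithfulness of $\rho$. The kernel $\mathfrak{k}=\ker\rho$ is an ideal of the semisimple algebra $S$, hence a semisimple direct summand $S=\mathfrak{k}\oplus S'$; since $\rho(\mathfrak{k})=0$ we get $[\mathfrak{k},N]=0$ and $[\mathfrak{k},S']=0$, so $L=\mathfrak{k}\oplus(S'\oplus_\rho N)$ is a decomposition of $L$ into ideals. As $N\neq 0$ (indeed $N^2\neq 0$ since $t\geq 2$) and $L$ is directly indecomposable, $\mathfrak{k}=0$. This is also immediate from the remark in the Introduction that any indecomposable Lie algebra is faithful.

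Next I would build the module decomposition. Because $\rho(S)\subseteq \mathrm{Der}\,N$, every characteristic ideal of $N$ is an $S$-submodule; in particular each term $N^j$ of the lower central series is an $S$-submodule. I would then choose the $m_j$ \emph{recursively} rather than independently, precisely so that the inclusion $m_j\subseteq[m_1,m_{j-1}]$ comes for free. Take $m_1$ to be any $S$-submodule complement of $N^2$ in $N$ (complete reducibility). Assuming $m_1,\dots,m_{j-1}$ have been fixed with $N^{i-1}=m_{i-1}\oplus N^i$ for $i\le j$, note that $m_1$ and $m_{j-1}$ are $S$-submodules, hence so is $[m_1,m_{j-1}]$ (the action is by derivations, so $s\cdot[a,b]=[s\cdot a,b]+[a,s\cdot b]$); expanding $N^j=[N,N^{j-1}]=[m_1+N^2,\,m_{j-1}+N^j]$ and using $[m_1,N^j]\subseteq N^{j+1}$, $[N^2,m_{j-1}]\subseteq N^{j+1}$, $[N^2,N^j]\subseteq N^{j+1}$ gives
$$
N^j=[m_1,m_{j-1}]+N^{j+1}.
$$
Applying complete reducibility \emph{inside} the $S$-module $[m_1,m_{j-1}]$ to its submodule $[m_1,m_{j-1}]\cap N^{j+1}$ produces an $S$-submodule $m_j\subseteq[m_1,m_{j-1}]$ complementary to that intersection, and one checks $N^j=m_j\oplus N^{j+1}$. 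Iterating down to $j=t$ (where $N^{t+1}=0$) yields $N=m_1\oplus\cdots\oplus m_t$ together with property ii). This recursive bookkeeping — making sure the successive complements chain up correctly and stay inside $[m_1,m_{j-1}]$ — is the only point where a little care is needed; everything else is formal.

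For property i), the fact that $m_1$ generates $N$ as a subalgebra is the standard lower-central-series argument: if $M$ denotes the subalgebra generated by $m_1$, then $N=M+N^2$ forces $N^2=[m_1,m_1]+N^3\subseteq M+N^3$, hence $N=M+N^3$, and inductively $N=M+N^k$ for all $k$, so $N=M$ by nilpotency. Then $\rho\mid_{m_1}$ is faithful: if $s\in S$ annihilates $m_1$, the derivation $\rho(s)$ vanishes on a generating set and therefore on all of $N$, so $s\in\ker\rho=0$. Finally, for iii) I would observe that the bracket $m_1\times m_{j-1}\to N^j$ is $S$-bilinear, hence factors through an $S$-module epimorphism $m_1\otimes m_{j-1}\twoheadrightarrow[m_1,m_{j-1}]$ (for $j=2$ it further factors through $\wedge^2 m_1$); by complete reducibility $[m_1,m_{j-1}]$, and a fortiori its submodule $m_j$, is a sum of irreducible components occurring in $m_1\otimes m_{j-1}$. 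If $\dim m_{j-1}=1$, then $m_{j-1}$ is a trivial $S$-module (a one-dimensional representation of $S=[S,S]$ is zero), whence $m_1\otimes m_{j-1}\cong m_1$ and the irreducible components of $m_j$ lie among those of $m_1$.
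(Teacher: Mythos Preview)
The paper does not supply its own proof of this proposition: it is quoted verbatim as \cite[Theorem~2]{Sn10} and left unproved. Your argument is correct and is the natural one --- Weyl's complete reducibility to split off $S$-complements of the characteristic ideals $N^j$, plus the recursive choice of $m_j$ inside $[m_1,m_{j-1}]$ using $N^j=[m_1,m_{j-1}]+N^{j+1}$ --- and this is essentially the line of reasoning in \v{S}nobl's original paper as well. The only places requiring care (the recursive complement inside $[m_1,m_{j-1}]$ rather than just inside $N^j$, and the observation that a one-dimensional $S$-module is trivial because $S=[S,S]$) you have handled correctly.
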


Previous proposition points out that, for nilpotent algebras that admit Levi extension(s), the quotient $N^j/N^{j+1}$ of two consecutive terms in the l.c.s. of $N$ splits into irreducible pieces inside the tensor product $\otimes^jN/N^2$. This will be the central structural feature on our resuls. The last part of this section is devoted to show several examples of nilpotent Lie algebras admitting or not Levi extension(s) following the ideas in \cite{OnKh75}. The examples are obtained from natural decompositions of linear maps spaces involving symplectic Lie algebras. In the examples, we will remark some module decompositions to enlighten the results in Section 4.

\begin{examp}\label{ex1}Following \cite[Chapter 4, Section 4.6,]{Di77}, a \emph{Heisenberg algebra} $(\mathfrak{h},[x,y])$ is a Lie algebra  such that its center is one dimensional and equal to $\mathfrak{h}^2$. For  such an algebra, if we denote by $Z(\mathfrak{h})=kz$, the kernel of the alternating bilinear form $b:\mathfrak{h}\times \mathfrak{h}\to k$ given by $[xy]=b(x,y)z$ is just $kz$, so there exists a basis $\{x_1,\dots,x_n,y_1,\dots,y_n, z\}$, called \emph{standard basis} of $\mathfrak{h}$, such that $[x_i,y_j]=-[y_jx_i]=\delta_{ij}z$ and the rest of brackets of basic elements are zero. We will denote by $\mathfrak{h}_{n}$ the Heisenberg algebra generated by $2n+1$ vectors. 

Now we investigate the structure of the derivation algebra $Der\, \mathfrak{h}_n$ (an alternative matrix description can be found in \cite{Be94} and in \cite[Section 2]{JiMeZh96}). Let $V$ be an arbitrary complement of $Z(\mathfrak{h}_n)=kz$ in $\mathfrak{h}_n$. Then $\mathfrak{h}_n=V\oplus kz$ is a $2$-graded decomposition, so we can introduce the natural grading on the set of endomorphisms of $\mathfrak{h}_n$:$$End\,(\mathfrak{h}_n)=End_{\bar{0}}\,(\mathfrak{h}_n)\oplus End_{\bar{1}}\,(\mathfrak{h}_n).$$

Let $\delta\in Der\, \mathfrak{h}_n$ and consider the homogeneous decomposition $$\delta=\delta_0+\delta_1.
$$Since the center is a characteristic ideal of dimension $1$, we have that $\delta_0(z)=\alpha z$, $\delta_1(z)=0$ and $\delta_i\in Der\, \mathfrak{h}_n$. But $\delta_0\in Der\, \mathfrak{h}_n$ if and only if $d_0\vline_{_V}-\frac{\alpha}{2}id_V\in \mathfrak{sp}_{2n}(V,b)$ and therefore,$$\delta_0=f+ \frac{\alpha}{2}\,\widehat{id}_V
$$where $f\,\vline_{_V}\in \mathfrak{sp}_{2n}(V), f(z)=0$, the map $\widehat{id}_V$ is the identity on $V$ and $\widehat{id}_V(z)=2z$. Then:$$Der\, \mathfrak{h}_n=\{\delta: \delta\,\vline_{_V} \in  \mathfrak{sp}_{2n}(V,b), \delta(z)=0\} \oplus k\cdot \widehat{id}_V \oplus  \{\delta: \delta(V)\subset kz, \delta(z)=0\}.
$$The algebra $Der\, \mathfrak{h}_n$ is a Lie subalgebra of $\mathfrak{gl}(\mathfrak{h}_n)=End\,(\mathfrak{h}_n)^-$ with product $[\delta_1,\delta_2]=\delta_1\delta_2-\delta_2\delta_1$. Note that the solvable radical of $Der\, \mathfrak{h}_n$ is just the $(2n+1)$-dimensional space
\begin{eqnarray}\label{rn}
\mathfrak{r}_n=k\cdot \widehat{id}_V \oplus  \{\delta: \delta(V)\subset kz, \delta(z)=0\}
\end{eqnarray}
and the (abelian) nilradical is $\mathfrak{n}=\{\delta: \delta(V)\subset kz, \delta(z)=0\}$. The sublagebra $$\mathfrak{sp}_{2n}(V,b)^\sharp=\{\delta: \delta\, \vline_{_V} \in  \mathfrak{sp}_{2n}(V,b), \delta(z)=0\}$$is a Levi factor of $Der\, \mathfrak{h}_n$ isomorphic to the simple symplectic Lie algebra $ \mathfrak{sp}_{2n}(V,b)$. Under the adjoint representation of  $Der\, \mathfrak{h}_n$, the map $\widehat{id}_V$ acts on $\mathfrak{n}$ as the identity and the $\mathfrak{sp}_{2n}(V,b)^\sharp$-module $\mathfrak{n}$ is isomorphic to the dual module $V^\star=V(\lambda_1)^*$ of the natural $\mathfrak{sp}_{2n}(V,b)$-module $V$. Both $V$ and $V^*$ are isomorphic modules of fundamental weight $\lambda_1$ as highest weight. Note that $\mathfrak{h}_n=V(\lambda_1)\oplus V(0)$ through the representation $\rho=id$ of $\mathfrak{sp}_{2n}(V,b)^\sharp$ on $\mathfrak{h}_n$.  $\square$
\end{examp}

The remarks in previous Example \ref{ex1} can be summarized in the following result:

\begin{prop}\label{heisenberglevi}Any faithful Lie algebra with radical $\mathfrak{h}_{n}$ is isomorphic to a Lie algebra of the form $S\oplus_{id}\mathfrak{h}_{n}$ where $S$ is a semisimple subalgebra of the symplectic Lie algebra $\mathfrak{sp}_{2n}(V,b)^\sharp=\{\delta\in \mathfrak{gl}(\mathfrak{h}_n): \delta\,\vline_{_V} \in  \mathfrak{sp}_{2n}(V,b), \delta(z)=0\}$. Moreover, the universal Lie algebra with radical $\mathfrak{h}_{n}$, $L_0(\mathfrak{h}_{n})=\mathfrak{sp}_{2n}(V,b)^\sharp\oplus_{id}\mathfrak{h}_{n}$ is indecomposable.
 \end{prop}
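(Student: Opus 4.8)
The plan is to read off both assertions from Theorem~\ref{OnKh75} together with the explicit description of $Der\,\mathfrak{h}_n$ obtained in Example~\ref{ex1}. First I would note that Example~\ref{ex1} exhibits $\mathfrak{sp}_{2n}(V,b)^\sharp$ as a Levi factor of $Der\,\mathfrak{h}_n$; since (as recalled in the text after Theorem~\ref{OnKh75}) the universal Lie algebra does not depend, up to isomorphism, on the chosen Levi factor of $Der\, R$, we may take $S_0=\mathfrak{sp}_{2n}(V,b)^\sharp$, so that $L_0(\mathfrak{h}_n)=\mathfrak{sp}_{2n}(V,b)^\sharp\oplus_{id}\mathfrak{h}_n$. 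The first assertion is then precisely the first statement of Theorem~\ref{OnKh75} specialized to $R=\mathfrak{h}_n$: every faithful Lie algebra with radical $\mathfrak{h}_n$ is isomorphic to $S\oplus_{id}\mathfrak{h}_n$ for a semisimple subalgebra $S$ of $S_0=\mathfrak{sp}_{2n}(V,b)^\sharp$.

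For the indecomposability of $L_0=L_0(\mathfrak{h}_n)$ I would argue by contradiction, assuming $L_0=I_1\oplus I_2$ with $I_1,I_2$ nonzero ideals. The Levi factor $\mathfrak{sp}_{2n}(V,b)^\sharp$ is simple, hence lies in a single summand, say $I_1$; then $[\mathfrak{sp}_{2n}(V,b)^\sharp,I_2]\subseteq I_1\cap I_2=0$, so $I_2$ is contained in the centralizer of $\mathfrak{sp}_{2n}(V,b)^\sharp$ in $L_0$. The key point is that this centralizer is exactly $Z(\mathfrak{h}_n)=kz$: the center of the simple algebra $\mathfrak{sp}_{2n}$ vanishes, and by Example~\ref{ex1} the module $\mathfrak{h}_n=V(\lambda_1)\oplus V(0)$ (under $\rho=id$) has $kz$ as its only trivial submodule, so nothing outside $kz$ is annihilated by $\mathfrak{sp}_{2n}(V,b)^\sharp$. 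Since $\dim kz=1$ and $I_2\neq 0$, this forces $I_2=kz$.

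It then remains to exclude $L_0=I_1\oplus kz$ with $I_1$ an ideal, and here I would exploit that $kz=[\mathfrak{h}_n,\mathfrak{h}_n]$ is hit by a single bracket: choosing $x_1,y_1$ in a standard basis of $\mathfrak{h}_n$ with $[x_1,y_1]=z$ and writing $x_1=u+\alpha z$, $y_1=v+\beta z$ with $u,v\in I_1$, centrality of $z$ in $L_0$ yields $z=[x_1,y_1]=[u,v]\in I_1$, contradicting $I_1\cap kz=0$. Hence $L_0(\mathfrak{h}_n)$ is indecomposable. I do not expect a real obstacle: all the substance has been front-loaded into the computation of $Der\,\mathfrak{h}_n$ and its Levi factor in Example~\ref{ex1}, and the only mildly delicate steps are the appeal to uniqueness of the universal algebra (to fix $S_0$ concretely) and the centralizer computation, both of which are routine module-theoretic observations.
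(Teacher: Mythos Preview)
Your argument is correct. For the first assertion you do exactly what the paper does (implicitly): combine the Levi factor of $Der\,\mathfrak{h}_n$ computed in Example~\ref{ex1} with Theorem~\ref{OnKh75}. One small point worth making explicit is why the simple subalgebra $\mathfrak{sp}_{2n}(V,b)^\sharp$ must lie entirely in one summand $I_1$: the projection of $\mathfrak{sp}_{2n}^\sharp$ to $I_2$ is a Lie homomorphism with semisimple image inside a solvable ideal if the other projection is nonzero, hence vanishes; you clearly have this in mind, but stating it avoids any ambiguity.

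For indecomposability your route differs from the paper's. The paper writes down the full $\mathfrak{sp}_{2n}(V,b)^\sharp$-module decomposition $L_0=V(2\lambda_1)\oplus V(\lambda_1)\oplus V(0)$, lists all submodules (there are only eight since the three irreducibles are pairwise non-isomorphic), observes that no proper ideal can contain $V(2\lambda_1)$, and concludes that the only nonzero proper ideals are $kz$ and $\mathfrak{h}_n$, which are nested and hence cannot give a direct decomposition. You instead compute the centralizer of $\mathfrak{sp}_{2n}^\sharp$ in $L_0$ directly as $kz$, forcing $I_2=kz$, and then use $z=[x_1,y_1]\in[I_1,I_1]$ to reach a contradiction. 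Both arguments ultimately rest on the module description of $\mathfrak{h}_n$ from Example~\ref{ex1}; the paper's approach has the side benefit of exhibiting the entire ideal lattice of $L_0$, while yours is a bit more portable (it only needs that the centralizer of the Levi factor lies inside the derived subalgebra of the radical).
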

\begin{proof}The decomposition as $\mathfrak{sp}_{2n}(V,b)^\sharp$-modules of $L_0$ is $V(2\lambda_1)\oplus V(\lambda_1)\oplus V(0)$, so the nontrivial submodules are $V(2\lambda_1)=\mathfrak{sp}_{2n}(V,b)^\sharp$, $V(0)=kz$, $V(\lambda_1)=\mathfrak{h}_n$ and any direct sum of two modules of them. A proper ideal of $L_0$ cannot contains the $S$-module $V(2\lambda_1)$. So, the only possibilities for ideals are $Z(\mathfrak{h}_n)=kz$ and $\mathfrak{h}_n$, which proves the last assertion.
\end{proof}
 
However, not all nilpotent Lie algebras admit a Levi extension. This is the case of filiform Lie algebras of dimension $\geq 4$ and the so called characteristically nilpotent Lie algebras. 
\begin{examp}\label{ex2} From \cite{OnVi94}, a Lie algebra $\mathfrak{F}_n$ of dimension $n\geq 3$ and {\small $(n-1)$}-nilindex is called \emph{$n$-filiform}. Note that $\mathfrak{F}_3$ is just the Heisenberg $3$-dimensional $\mathfrak{h}_1$.

Filiform algebras are of \emph{type $2$} and the quotient of two consecutive terms in their l.c.s. is 1-dimensional. This last assertion is the special feature for which this class of algebras does not have nontrivial Levi extension(s) for dimension $\geq 4$. This fact was first noticed in \cite[Main Theorem 1]{An06} for indecomposable Lie algebras. The proof given in this work uses arguments on basis and structure constants. Here we will stablish that, except for $\mathfrak{F}_3$, a filiform Lie algebra cannot be the radical of a nontrivial Levi decomposition. The proof follows easily from the structure result iii) in  Proposition \ref{basiconLevi}. Note that according to next Proposition and its Corollary, a corrigenda of Theorem 1 in \cite{An06} must be done. \hfill $\square$
\end{examp}

\begin{prop}\label{filiformNO}The unique filiform Lie algebra that admits a nontrivial Levi extension is the Heisenberg algebra $\mathfrak{h}_1=span<x,y,z>$. For this algebra, $Der\, \mathfrak{h}_1=\mathfrak{sp}_2(k\cdot x\oplus k\cdot y,b)^\sharp\oplus \mathfrak{r}_1$ as described in Example \ref{ex1} and therefore, the split $3$-dimensional simple Lie algebra is its unique nontrivial Levi extension. Moreover, the universal Lie with radical $\mathfrak{h}_1$ is the $6$-dimensional Lie algebra $L_0(\mathfrak{h}_1)=\mathfrak{sp}_2(k\cdot x\oplus k\cdot y,b)^\sharp\oplus_{id}\mathfrak{h}_1$ with basis
$$x,\ y\ ,z\ , h=\left(
\begin{array}{cc}
 1& 0 \\
0 &  -1  
\end{array}
\right),\  e=\left(
\begin{array}{cc}
 0& 1 \\
0 &  0  
\end{array}
\right),\  f=\left(
\begin{array}{cc}
 0& 0 \\
1 &  0  
\end{array}
\right)
$$and nonzero products: $xy=z$, $hx=x$, $hy=-y$, $ey=x$, $fx=y$, $he=2e$, $hf=-2f$ and $ef=h$. 
\end{prop}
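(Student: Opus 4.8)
The plan is to invoke the structural constraints of Proposition~\ref{basiconLevi} to rule out every filiform algebra except $\mathfrak{h}_1$, and then to read off the derivation algebra and universal algebra of $\mathfrak{h}_1$ directly from Example~\ref{ex1} and Proposition~\ref{heisenberglevi}. The first step is to dispose of the abelian and the $2$-dimensional cases: a filiform algebra has type $2$ by definition, so $\dim N\ge 3$, and for $\dim N=3$ the only filiform algebra is $\mathfrak{F}_3=\mathfrak{h}_1$, for which the result is already contained (with $n=1$) in Example~\ref{ex1} and Proposition~\ref{heisenberglevi}. So the real content is to show that no filiform algebra $\mathfrak{F}_n$ with $n\ge 4$ admits a nontrivial Levi extension.

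Next I would argue by contradiction. Suppose $\mathfrak{F}_n$, $n\ge 4$, is the radical of a directly indecomposable Lie algebra $L=S\oplus_\rho \mathfrak{F}_n$ with $S$ semisimple and $\rho$ nontrivial; by Proposition~\ref{propTurkowski} (since $\mathfrak{F}_n$ is nilpotent, the trivial summand $T$ is zero, so $\rho$ has no trivial submodules) and Proposition~\ref{basiconLevi}, $\mathfrak{F}_n$ decomposes as $m_1\oplus\cdots\oplus m_t$ with $t=n-1\ge 3$, where $m_1$ generates $\mathfrak{F}_n$, $N^j/N^{j+1}\cong m_j$, and $m_j\subseteq[m_1,m_{j-1}]$. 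The key numerical fact about filiform algebras is that $\dim N^j/N^{j+1}=1$ for all $2\le j\le t$, and $\dim N/N^2=\dim m_1=2$. So $m_1$ is a $2$-dimensional nontrivial $S$-module and every $m_j$ for $j\ge 2$ is $1$-dimensional. But a semisimple Lie algebra has no nontrivial $1$-dimensional (hence no nontrivial $2$-dimensional module that splits into trivials either) module; more precisely, a $2$-dimensional faithful module forces $S\cong\mathfrak{sl}_2(k)$ with $m_1$ the natural module $V(\lambda_1)$. Now apply item iii) of Proposition~\ref{basiconLevi} with $m_2=kx$ one-dimensional: the irreducible components of $m_3$ must lie among those of $m_1$, so $m_3$ would have to be a submodule of $V(\lambda_1)$, which is irreducible of dimension $2$ --- contradicting $\dim m_3=1$. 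Since $n\ge 4$ guarantees $t\ge 3$ and hence that $m_3$ exists and is nonzero (as $m_j\neq 0$ for all $j\le t$ by definition of nilindex), this is the desired contradiction.

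Having shown $\mathfrak{h}_1$ is the only filiform algebra with a nontrivial Levi extension, the remaining assertions are essentially quotations of earlier results. The structure $Der\,\mathfrak{h}_1=\mathfrak{sp}_2(k x\oplus k y,b)^\sharp\oplus\mathfrak{r}_1$ is the $n=1$ instance of the decomposition computed in Example~\ref{ex1}, with $\mathfrak{sp}_2(k)\cong\mathfrak{sl}_2(k)$ the split $3$-dimensional simple Lie algebra; since any nontrivial Levi extension $S\hookrightarrow Der\,\mathfrak{h}_1$ must be semisimple and $Der\,\mathfrak{h}_1$ has this unique Levi factor (of dimension $3$), $S$ must equal it. That the universal algebra is $L_0(\mathfrak{h}_1)=\mathfrak{sp}_2(k x\oplus k y,b)^\sharp\oplus_{id}\mathfrak{h}_1$ and is indecomposable is Proposition~\ref{heisenberglevi} with $n=1$; the explicit basis $\{x,y,z,h,e,f\}$ and the listed products are obtained by choosing the standard $\mathfrak{sl}_2$-triple $\{h,e,f\}$ acting on $\spann{x,y}$ as the natural module and checking the brackets, which is the routine computation I would simply state.

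The main obstacle is making the application of Proposition~\ref{basiconLevi} iii) airtight in the borderline low-dimensional cases: one must be sure that for $\mathfrak{F}_4$ (where $t=3$) the module $m_3$ is genuinely present and genuinely $1$-dimensional, and that the possibility $m_2$ of dimension $>1$ cannot occur --- but this is immediate from $\dim N^2/N^3=1$ for filiform algebras. The only other point needing a line of care is confirming that $m_1$ being a faithful $S$-module of dimension $2$ forces $S=\mathfrak{sl}_2(k)$ (rather than, say, a sum of simple ideals one of which acts trivially), which follows because $\rho$ is faithful by Proposition~\ref{basiconLevi} i) and the smallest faithful module of any semisimple Lie algebra other than $\mathfrak{sl}_2(k)$ has dimension $\ge 3$.
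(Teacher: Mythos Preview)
Your argument is essentially identical to the paper's: use Proposition~\ref{basiconLevi} to get the module chain $m_1,\ldots,m_{n-1}$, force $S\cong\mathfrak{sl}_2(k)$ with $m_1\cong V(1)$ from faithfulness on the $2$-dimensional generating module $m_1$, and then contradict part iii) at $j=3$ (the paper phrases this last step more tersely as ``$m_j=kx_j$ are trivial modules, which is not possible from iii)''). One small slip that does not affect the proof: your parenthetical appeal to Proposition~\ref{propTurkowski} to conclude that ``$\rho$ has no trivial submodules'' is mistaken---that proposition only gives $T=0$ here because $\mathfrak{F}_n$ is already nilpotent, which says nothing about trivial $S$-submodules inside $\mathfrak{F}_n$ (indeed every $m_j$ with $j\ge 2$ is one); fortunately you never use that claim.
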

\begin{proof} Assume first $S$ is a nontrivial Levi extension of $\mathfrak{F}_n$. Following i) of Proposition \ref{basiconLevi}, $\mathfrak{F}_n$ is an algebra generated by a $2$-dimensional module $m_1$. Since $S$ provides a nontrivial extension, $m_1$ must be a nontrivial module. So $S$ acts faithfully on $m_1$ and $S\cong \mathfrak{sl}_2(k)$, the simple $3$-dimensional split algebra for which $m_1$ is the irreducible $2$-dimensional $V(\lambda_1)=V(1)$. In case $n\geq 4$, we have $\mathfrak{F}_n=m_1\oplus \Sigma_{j=2}^{n-1}kx_j$, where $m_j=kx_j$ are trivial modules which is no possible from iii) in Proposition \ref{basiconLevi}. Then $n=3$, $\mathfrak{F}_3=\mathfrak{h}_1$ and Proposition \ref{heisenberglevi} completes the proof.
\end{proof}
\begin{cor} A filiform Lie algebra $\mathfrak{F}_n$ with $n\geq 4$ cannot be the solvable radical of any nonsolvable Lie algebra with nontrivial Levi decomposition. In particular, the derivation Lie algebra of a filiform algebra of dimension $\geq 4$ is a solvable algebra. \hfill $\square$
\end{cor}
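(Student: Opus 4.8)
The plan is to note that the statement is an immediate consequence of Proposition~\ref{filiformNO} together with Corollary~\ref{corlem11}, so the only thing to do is to make the (purely formal) dictionary between the two languages explicit. First I would record the equivalence: \emph{a solvable Lie algebra $R$ occurs as the solvable radical of a nonsolvable Lie algebra carrying a nontrivial Levi decomposition if and only if $R$ admits a nontrivial Levi extension}. One direction is immediate from Lemma~\ref{lem11}: if $S$ is a nontrivial Levi extension of $R$ with representation $\rho$, then $L=S\oplus_\rho R$ is nonsolvable (since $S\neq 0$), $R$ is a solvable ideal of $L$, and $L/R\cong S$ is semisimple, so any solvable ideal of $L$ containing $R$ coincides with $R$; hence $R$ is the solvable radical and $L=S\oplus_\rho R$ is the desired Levi decomposition, nontrivial because $\rho$ is. Conversely, if $L=S\oplus R$ is a Levi decomposition of a nonsolvable $L$ with $R$ as solvable radical, then for each $s\in S$ the map $\ad s\mid_R$ is a derivation of $R$ (Jacobi identity) with values in $R$ (since $R$ is an ideal), so $\rho=\ad S\mid_R$ is a representation of $S$ with $\rho(S)\subseteq Der\, R$; nontriviality of the decomposition means exactly that $\rho$ is nontrivial, i.e. $S$ is a nontrivial Levi extension of $R$.

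With this dictionary available, the first assertion is nothing but the negative part of Proposition~\ref{filiformNO} specialised to $\mathfrak{F}_n$ with $n\geq 4$: since the only filiform Lie algebra admitting a nontrivial Levi extension is $\mathfrak{h}_1=\mathfrak{F}_3$, no $\mathfrak{F}_n$ with $n\geq 4$ can be the solvable radical of a nonsolvable Lie algebra with a nontrivial Levi decomposition. For the ``in particular'' clause I would invoke Corollary~\ref{corlem11}, according to which a solvable Lie algebra admits a nontrivial Levi extension precisely when its derivation algebra is not solvable; as $\mathfrak{F}_n$ ($n\geq 4$) admits no nontrivial Levi extension, $Der\,\mathfrak{F}_n$ must be solvable.

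I do not expect any real obstacle here; the single point that needs a word of care is the precise meaning of ``nontrivial Levi decomposition''. One must exclude the decomposable case $L=S\oplus\mathfrak{F}_n$ in which the semisimple part acts trivially: this is a genuine Levi decomposition of a nonsolvable algebra having $\mathfrak{F}_n$ as its radical, but the associated representation $\rho$ is trivial, so $S$ is a \emph{trivial} Levi extension in the sense of Definition~\ref{leviextension} and the decomposition is not among the ones considered here. Once this convention is fixed, the corollary carries no content beyond Proposition~\ref{filiformNO} and Corollary~\ref{corlem11}.
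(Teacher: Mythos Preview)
Your proof is correct and follows exactly the route the paper intends: the corollary is stated with a $\square$ and no proof, so the authors regard it as an immediate consequence of Proposition~\ref{filiformNO} together with Corollary~\ref{corlem11}, which is precisely the dictionary you spell out. Your explicit handling of the phrase ``nontrivial Levi decomposition'' is a welcome clarification but does not depart from the paper's implicit argument.
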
 

\begin{examp}\label{ex3} Following \cite{DiLi57}, for a Lie algebra $\mathfrak{n}$ and $\mathfrak{D}=\mathrm{Der}\, \mathfrak{n}$, we define let recursively:$$
\mathfrak{n}^{[1]}=\mathfrak{D}(\mathfrak{n})=\{\Sigma\, \delta_i(x_i): x_i\in \mathfrak{n}, \delta_i\in \mathfrak{D}\}\quad \mathrm{and}\quad \mathfrak{n}^{[k+1]}=\mathfrak{D}(\mathfrak{n}^{[k]}).
$$The algebra $\mathfrak{n}$ is said to be \emph{characteristically nilpotent} (for short $\mathfrak{CN}$-algebra) if $\mathfrak{n}^{[k]}=0$ for some $k$. The first known example of a $\mathfrak{CN}$-algebra is the $8$-dimensional algebra $\mathfrak{Dl}_8$ described in terms of the basis $\{a_i: 1\leq i\leq 8\}$ and the products: $[a_i,a_j]=-[a_j,a_i]$ and $[a_i,a_j]=0$ for $i<j$; $[a_1,a_2]=-[a_3,a_4]=a_5$, $[a_1,a_3]=[a_2,a_4]=a_6$, $[a_1,a_4]=-[a_2,a_6]=-[a_3,a_5]=a_7$ and $[a_1,a_5]=-[a_2,a_3]=[a_4,a_6]=-a_8$.

By using \cite[Theorem 1 ]{LeTo59}, we get that the class of characteristically nilpotent algebras equals the class of Lie algebras whose derivation algebra is nilpotent. So, the derivation algebra of a $\mathfrak{CN}$-algebra does not contain semisimple subalgebras and therefore, the algebras in this class can not have nontrivial Levi extensions according to Corollary \ref{corlem11}. \hfill $\square$
\end{examp}

Proposition 6.5 in \cite{LeLu71} allows us to recover $\mathfrak{CN}$-algebras to get nilpotent Lie algebras admitting Levi extension(s): Let $\mathfrak{n}$ be a characteristically nilpotent algebra such that $\{x\in \mathfrak{n}:[x,\mathfrak{n}]\subseteq Z(\mathfrak{n})\}\subseteq \mathfrak{n}^2$, $z\in \mathfrak{n}$ a central element and $V_1 $ a $2$-dimensional vector space. Combining these ingredients, in \cite{LeLu71} the authors endowed the vector space $\mathcal{L}_1=\mathfrak{n}\oplus V_1$ with a structure of nilpotent Lie algebra for which its derivation algebra splits as $
\mathrm{Der}\ \mathcal{L}_1=\mathfrak{sl}_2(k)\oplus \mathfrak{N}$, $\mathfrak{N}$ being a nilpotent ideal. Hence the universal Lie algebra with radical $\mathcal{L}_1$ is given by $L_0(\mathcal{L}_1)=\mathfrak{sl}_2(k)\oplus_{id} \mathcal{L}_1$. The special trick in this construction is to define a Heisenberg algebra structure on $V_1\oplus kz$. This result can be generalized as follows:

\begin{prop} Let $\mathfrak{n}$ be a characteristically nilpotent Lie algebra such that the ideal $\{x\in \mathfrak{n}:[x,\mathfrak{n}]\subseteq Z(\mathfrak{n})\}$ is contained in the derived subalgebra $\mathfrak{n}^2$ and let $z_0\in \mathfrak{n}$ a fixed central element. For any $2n$-dimensional vector space $V_n=\mathrm{span}<x_i,y_i: 1\leq i\leq n>$, consider the direct sum $$\mathcal{L}_n(\mathfrak{n}, z_0)=\mathfrak{n}\oplus V_n$$with extended product $[x_i,y_i]=-[y_i,y_i]x_0$, $[V_n,\mathfrak{n}]=0$ and $\mathfrak{n}$ as subalgebra. Then, $\mathcal{L}_n(\mathfrak{n}, z_0)$ is a nilpotent Lie algebra with the same nilindex that $\mathfrak{n}$ and$$
\mathrm{Der}\ \mathcal{L}_n(\mathfrak{n},z_0)=\mathfrak{sp}_{2n}(V_n,b)^\star\oplus \mathfrak{N}
$$where $\mathfrak{sp}_{2n}(V_n,b)^\star=\{\delta\in \mathfrak{gl}(\mathcal{L}_n(\mathfrak{n}, z_0)): \delta\,\vline_{V_n} \in  \mathfrak{sp}_{2n}(V_n,b), \delta\,\vline_{\mathfrak{n}}=0\}$ and
\[
\begin{array}{cll}
\mathfrak{N}&=&\{\delta:\delta\,\vline_{{V_n}}=0, \delta\,\vline_{\mathfrak{n}} \in Der\, \mathfrak{n}, \delta(z_0)=0\}\ \oplus\\
&&\\[-2ex]
&&\{\delta: \delta(V_n)\subseteq \mathfrak{n}^2, \delta(\mathfrak{n})\subseteq V_n, \delta\,\vline_{\mathfrak{n}^2}=0, ad_\mathfrak{n}\, \delta(u)=-b(u,\delta(\cdot))z_0\  \forall_{u\in V_n}\},
\end{array}
\]
is a nilpotent ideal which consists of nilpotent linear operators.

In particular, any faithful Lie algebra with radical $\mathcal{L}_n(\mathfrak{n}, z_0)$ is isomorphic to a Lie algebra of the form $S\oplus_{id}\mathcal{L}_n(\mathfrak{n}, z_0)$ where $S$ is a semisimple subalgebra of the symplectic algebra $\mathfrak{sp}_{2n}(V_n,b)^*$.
\end{prop}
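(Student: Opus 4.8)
The plan has three parts: (i) check that $\mathcal{L}:=\mathcal{L}_n(\mathfrak{n},z_0)$ is a nilpotent Lie algebra with the same nilindex as $\mathfrak{n}$; (ii) compute $\mathrm{Der}\,\mathcal{L}$ by studying the block behaviour of a derivation relative to the decomposition $\mathcal{L}=\mathfrak{n}\oplus V_n$ and the characteristic ideals of $\mathcal{L}$; (iii) read off the statement on faithful extensions from the Onishchik--Khakimdzhanov theorem. For (i) I would first note that $(V_n\oplus kz_0,[\,,\,])$ is a copy of the Heisenberg algebra $\mathfrak{h}_n$ with symplectic form $b$ (in particular $z_0\neq 0$), so the Jacobi identity on $\mathcal{L}$ is a routine check using that $z_0\in Z(\mathfrak{n})$ and $[V_n,\mathfrak{n}]=0$. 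The lower central series is then immediate: $\mathcal{L}^2=\mathfrak{n}^2+kz_0\subseteq\mathfrak{n}$ and $\mathcal{L}^j=\mathfrak{n}^j$ for $j\geq 3$; since a characteristically nilpotent algebra is not abelian we have $\mathfrak{n}^2\neq 0$, so the nilindex is unchanged.

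The heart of the argument is (ii). First I would single out the characteristic ideals that control the situation: $\mathcal{L}^2=\mathfrak{n}^2+kz_0$; the centre $Z(\mathcal{L})=Z(\mathfrak{n})$ (nondegeneracy of $b$ rules out any part of $V_n$); and the ideal $\{w\in\mathcal{L}:[w,\mathcal{L}]\subseteq Z(\mathcal{L})\}=V_n\oplus C$ where $C=\{x\in\mathfrak{n}:[x,\mathfrak{n}]\subseteq Z(\mathfrak{n})\}$ --- this is where the standing hypothesis $C\subseteq\mathfrak{n}^2$ enters. Given $\delta\in\mathrm{Der}\,\mathcal{L}$, write it through the projections $\pi_{\mathfrak{n}},\pi_V$ as the quadruple $d=\pi_{\mathfrak{n}}\delta|_{\mathfrak{n}}$, $\beta=\pi_V\delta|_{\mathfrak{n}}$, $\alpha=\pi_V\delta|_{V_n}$, $\gamma=\pi_{\mathfrak{n}}\delta|_{V_n}$. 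Evaluating the derivation identity on $[\mathfrak{n},\mathfrak{n}]$ (and using that $\delta$ preserves $\mathcal{L}^2\subseteq\mathfrak{n}$) gives $\beta|_{\mathfrak{n}^2}=0$ and $d\in\mathrm{Der}\,\mathfrak{n}$; evaluating on $0=[x,v]$, $x\in\mathfrak{n}$, $v\in V_n$, gives the compatibility $\ad_{\mathfrak{n}}\gamma(v)=-b(v,\beta(\cdot))z_0$, whence $\gamma(V_n)\subseteq\{y:[y,\mathfrak{n}]\subseteq kz_0\}\subseteq C\subseteq\mathfrak{n}^2$; and evaluating on $[v,w]=b(v,w)z_0$ gives $\delta(z_0)=\mu z_0$ for a scalar $\mu$ with $\alpha-\frac{\mu}{2}\mathrm{Id}_{V_n}\in\mathfrak{sp}_{2n}(V_n,b)$. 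The decisive observation is that $d(z_0)=\mu z_0$ while $d$ is a nilpotent operator (because $\mathfrak{n}$ is characteristically nilpotent, so $\mathrm{Der}\,\mathfrak{n}$ is nilpotent); hence $\mu=0$, and so $\delta(z_0)=0$ and $\alpha\in\mathfrak{sp}_{2n}(V_n,b)$.

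With this in hand I would split $\delta=\delta_{sp}+\delta_1+\delta_2$, where $\delta_{sp}$ acts as $\alpha$ on $V_n$ and by $0$ on $\mathfrak{n}$ (so $\delta_{sp}\in\mathfrak{sp}_{2n}(V_n,b)^\star$), $\delta_1$ acts as $d$ on $\mathfrak{n}$ and by $0$ on $V_n$ (so $\delta_1$ lies in the first summand of $\mathfrak{N}$), and $\delta_2=\delta-\delta_{sp}-\delta_1$, which by the constraints just derived lies in the second summand of $\mathfrak{N}$; conversely, a direct check shows that every element of $\mathfrak{sp}_{2n}(V_n,b)^\star$ and of either summand of $\mathfrak{N}$ is a derivation (for the second summand the displayed compatibility identity is precisely what makes $\delta([x,v])=0=[\delta x,v]+[x,\delta v]$), and the three families are in direct sum since they are separated by their actions on the characteristic subspaces $\mathfrak{n}$, $V_n$, $\mathfrak{n}^2$. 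This gives $\mathrm{Der}\,\mathcal{L}=\mathfrak{sp}_{2n}(V_n,b)^\star\oplus\mathfrak{N}$. That $\mathfrak{N}$ is an ideal follows from routine bracket computations yielding $[\mathfrak{sp}_{2n}^\star,\mathfrak{N}_i]\subseteq\mathfrak{N}_i$, $[\mathfrak{N}_1,\mathfrak{N}_1]\subseteq\mathfrak{N}_1$, $[\mathfrak{N}_1,\mathfrak{N}_2]\subseteq\mathfrak{N}_2$, $[\mathfrak{N}_2,\mathfrak{N}_2]\subseteq\mathfrak{N}_1$, where $\mathfrak{N}_1,\mathfrak{N}_2$ denote the two summands in the statement.

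The one genuinely delicate point --- the main obstacle --- is showing that $\mathfrak{N}$ consists of nilpotent operators; everything else then follows, since $\ad\delta$ is nilpotent for every $\delta\in\mathfrak{N}$, so Engel's theorem makes $\mathfrak{N}$ a nilpotent Lie algebra, hence the radical of $\mathrm{Der}\,\mathcal{L}$, with $\mathfrak{sp}_{2n}(V_n,b)^\star$ a Levi factor. Here I would use that $\mathfrak{N}_1\cong\{d\in\mathrm{Der}\,\mathfrak{n}:d(z_0)=0\}$ is a Lie algebra of nilpotent operators on $\mathfrak{n}$, so Engel provides a common flag on $\mathfrak{n}$ refining $\mathfrak{n}\supseteq\mathfrak{n}^2\supseteq 0$; then, since each element of $\mathfrak{N}$ maps $\mathfrak{n}$ into $\mathfrak{n}\oplus V_n$ with $\mathfrak{n}$-component a step down this flag, maps $V_n$ into $\mathfrak{n}^2$, and annihilates $\mathfrak{n}^2$, expanding a long product $\psi_m\cdots\psi_1$ of elements of $\mathfrak{N}$ along $\mathcal{L}=\mathfrak{n}\oplus V_n$ shows that every surviving term is a word with at most $\dim\mathfrak{n}$ consecutive steps inside $\mathfrak{n}$, at most one step $\mathfrak{n}\to V_n$, at most one step $V_n\to\mathfrak{n}^2$, and at most $\dim\mathfrak{n}^2$ steps inside $\mathfrak{n}^2$; hence the product vanishes once $m\geq\dim\mathfrak{n}+\dim\mathfrak{n}^2+2$. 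Finally, for (iii), Theorem \ref{OnKh75} gives that every faithful Lie algebra with radical $\mathcal{L}$ is isomorphic to $S\oplus_{id}\mathcal{L}$ for some semisimple subalgebra $S$ of a Levi factor of $\mathrm{Der}\,\mathcal{L}$, that is, of $\mathfrak{sp}_{2n}(V_n,b)^\star$, which is exactly the asserted conclusion.
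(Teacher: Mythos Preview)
Your proposal is correct and follows essentially the same route as the paper: both decompose a derivation along the $\mathbb{Z}_2$-grading $\mathcal{L}=\mathfrak{n}\oplus V_n$, derive the block constraints from the Leibniz rule, use characteristic nilpotency of $\mathfrak{n}$ to force the eigenvalue on $z_0$ to be zero (hence $\alpha\in\mathfrak{sp}_{2n}$), and invoke the hypothesis $C\subseteq\mathfrak{n}^2$ to place $\gamma(V_n)$ inside $\mathfrak{n}^2$.

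One small slip to fix in your nilpotency argument: you write that each $\delta\in\mathfrak{N}$ ``annihilates $\mathfrak{n}^2$'', but this fails for the $\mathfrak{N}_1$-part. What is true, and what your subsequent bound $m\geq\dim\mathfrak{n}+\dim\mathfrak{n}^2+2$ actually uses, is that the $V_n$-component of $\delta$ vanishes on $\mathfrak{n}^2$, so on $\mathfrak{n}^2$ only the $\mathfrak{N}_1$-part acts, moving down the common flag; with that correction your argument goes through. The paper's version of this step is shorter: it simply observes that for $\delta\in\mathfrak{N}$ some power $\delta^r$ maps $\mathcal{L}$ into $\mathcal{L}^2=\mathfrak{n}^2$ (because $\delta_0|_{\mathfrak{n}}$ is nilpotent and $\delta_1$ already sends $V_n$ into $\mathfrak{n}^2$ and kills $\mathfrak{n}^2$), and then nilpotency of $\delta$ follows from the nilpotency of $\mathcal{L}$ via the lower central series.
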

\begin{proof} Along the proof we write $\mathcal{L}_n$ instead of $\mathcal{L}_n(\mathfrak{n},z_0)$. It is immediate to check that the product defined above gives a Lie algebra structure on $\mathcal{L}_n$. Note that $Z(\mathcal{L}_n)=Z(\mathfrak{n})$ and $\mathcal{L}_n^2=\mathfrak{n}^2$. Then $\mathcal{L}^i=\mathfrak{n}^i$ for $i\geq 2$ and therefore $\mathcal{L}$ is nilpotent.

Consider now the nondegenerate alternating form: $b:V_n\times V_n \to k$ defined by $[u,v]=b(u,v)z_0$. Since $\mathcal{L}=V_n\oplus \mathfrak{n}$ is a $2$-graded algebra, any $d\in End(\mathcal{L}_n)$ can be decomposed as a sum of homogeneous components $\delta=\delta_0+\delta_1$, $\delta_i\in End_{\bar i}(\mathcal{L}_n)$. Assuming $\delta([x,y])=[\delta(x),y]+[x,\delta(y)]$, we get that $\delta:\mathcal{L}_n \to \mathcal{L}_n\in Der\, \mathcal{L}_n$ if and only if the following conditions holds:
\begin{enumerate}
\item[a)]for $u,v \in V_n$, $b(u,v)\delta_0(z_0)=(b(\delta_0(u),v)+b(u,\delta_0(v)))z_0$;
\item[b)]$\delta_1\,\vline_{\mathfrak{n}^2}=0$ and $\delta_0\,\vline_{\mathfrak{n}}\in Der\, \mathfrak{n}$;
\item[c)]for $u\in V_n, a\in \mathfrak{n}$, $[\delta_1(u),a]=-b(u,\delta_1(a))z_0$.
\end{enumerate}

Let $\delta$ be an arbitrary derivation of $\mathcal{L}_n$. Since $\mathfrak{n}$ is characteristically nilpotent, zero is the unique possible eigenvalue of $\delta$. Then, assertions $a)$ and $b)$ imply $\delta(z_0)=\delta_0(z_0)=\delta_1(z_0)=0$ and $\delta_0\,\vline_{V_n}\in\mathfrak{sp}_{2n}(V_n,b)$. Moreover, from $\{x\in \mathfrak{n}:[x,\mathfrak{n}]\subseteq Z(\mathfrak{n})\}\subseteq \mathfrak{n}^2$ and c) we have that $\delta_1(V_n)\subseteq \mathfrak{n}^2$.

From previous information we can decompose the derivation algebra of $\mathcal{L}$ into three subspaces:\[
\begin{array}{ll}
Der\, \mathcal{L}_n=&\{\delta\in End_{\bar 0}(\mathcal{L}_n): \delta\,\vline_{V_n} \in  \mathfrak{sp}_{2n}(V_n,b), \delta\,\vline_\mathfrak{n}=0\}\ \oplus\\
&\\[-2ex]
&\{\delta\in End_{\bar 0}(\mathcal{L}_n):\delta\,\vline_{V_n} =0, \delta\,\vline_\mathfrak{n} \in Der\, \mathfrak{n}, \delta(z_0)=0\}\  \oplus\\
&\\[-2ex]
&\{\delta\in End_{\bar 1}(\mathcal{L}_n): \delta(V_n)\subseteq \mathfrak{n}^2, \delta\,\vline_{\mathfrak{n}^2}=0, ad_\mathfrak{n}\, \delta(u)=-b(u,\delta(\cdot))z_0\  \forall_{u\in V_n}\}.
\end{array}
\]Now, the vector space $\mathfrak{N}$ given by the second and third terms in the direct sum decomposition of $Der\, \mathcal{L}_n$ is and ideal. Moreover, the derivations in $\mathfrak{N}$ are nilpotent maps: for $\delta=\delta_0+\delta_1 \in N$ and using that $\mathfrak{n}$ is characteristically nilpotent, we can get $\delta^r(\mathcal{L}_n)\subseteq \mathfrak{n}^2=\mathcal{L}_n^2$ for some $s\geq 2$; so, the assertion follows from the nilpotency of $\mathcal{L}_n$.
\end{proof}

\begin{examp}\label{ex4}The center of the Lie algebra $\mathfrak{Dl}_8$ is $Z(\mathfrak{Dl}_8)=k\cdot e_7+k\cdot e_8$. It can be easily checked that $\{x\in \mathfrak{Dl}_8:[x,\mathfrak{Dl}_8]\subseteq Z(\mathfrak{Dl}_8)\}\subseteq \mathfrak{Dl}_8^2$, so for $n\geq 1$ and $(0,0)\neq(\alpha,\beta)\in k\times k$ we get the series of nilpotent Lie algebras ($4$-step nilpotent, as $\mathfrak{Dl}_8$):$$\mathcal{L}_n(\mathfrak{Dl}_8,\alpha e_7+\beta e_8)=\mathfrak{Dl}_8\oplus V_n.$$The universal Lie algebra with (nil)radical $\mathcal{L}_n(\mathfrak{Dl}_8,\alpha e_7+\beta e_8)$ is$$L_0(\mathcal{L}_n(\mathfrak{Dl}_8,\alpha e_7+\beta e_8))=\mathfrak{sp}_{2n}(V_n,b)^\star\oplus_{id}\mathcal{L}_n(\mathfrak{Dl}_8,\alpha e_7+\beta e_8).$$Hence $\mathcal{L}_n(\mathfrak{Dl}_8,\alpha e_7+\beta e_8)$ allows nontrivial Levi extensions through symplectic Lie algebras and their semisimple subalgebras. The $\mathfrak{sp}_{2n}(V_n,b)^\star$-module decomposition of $\mathcal{L}_n(\mathfrak{Dl}_8,\alpha e_7+\beta e_8)$ is $V(\lambda_1)\oplus 8V(0)$. \hfill$\square$
\end{examp}


\section{Universal free nilpotent algebras and Levi extensions}

In this section we will introduce the main tool to describe all the possible nilpotent Lie algebras that admit faithful Levi extension(s). Instead of considering the derivation algebra of a fixed nilpotent Lie algebra, we propose an alternative way starting from semisimple Lie algebras. The nilpotent Lie algebras we are looking for will appear as quotients of universal nilpotent Lie algebras generated by modules of semisimple Lie algebras.

Given a finite set $X=\{x_1,\ldots,x_d\}$ and the vector space $\mathfrak{m}$ with basis $X$, the free associative algebra generated by $X$ can be defined as:$$\mathfrak{F}_X=k1\oplus\sum_{j\geq 1}^\infty\otimes^j\mathfrak{m}.$$In \cite{Ja62} the free Lie algebra generated by $X$, and denoted by $\mathfrak{FL_X}$, is defined as the subalgebra of $(\mathfrak{F}_X^-,[ab]=ab-ba)$ generated by $X$. Then,$$
\mathfrak{FL}_X=\sum_{m\geq 1}\, \mathfrak{FL}_m
$$where $\mathfrak{FL}_m$ is the subspace generated by the linear combinations of homogeneous elements of the form $[x_{i_1},\dots, x_{i_m}]=[\dots[x_{i_1}x_{i_2}]\dots x_{i_m}]$, with $x_{i_s}\in X$. Let us denote by $\mathfrak{FL}^m$ the ideal $\sum_{j\geq m}\ \mathfrak{FL}_j$, and, for a fixed $t\geq 1$, let us consider the quotient Lie algebra:
\begin{eqnarray}\label{universal}
\mathcal{N}_{d,t}=\frac{\mathfrak{FL}_X}{\mathfrak{FL}_X^{t+1}}\ .
\end{eqnarray}
According to \cite[Proposition 4]{Sa70} and \cite[Proposition 1.4]{Ga73}, the algebra in (\ref{universal}) verifies the following universal property:
\begin{prop}The algebra $\mathcal{N}_{d,t}$ is a $t$-nilpotent and graded Lie algebra of type $d$ and any other $t$-nilpotent Lie algebra of type $d$ is an homomorphic image of $\mathcal{N}_{d,t}$. \hfill $\square$
\end{prop}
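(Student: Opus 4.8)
The plan is to establish the two claims separately: first that $\mathcal{N}_{d,t}$ is $t$-nilpotent, graded, of type $d$; and second the universal property that any $t$-nilpotent Lie algebra of type $d$ is a homomorphic image of it. The grading is essentially inherited: $\mathfrak{FL}_X=\bigoplus_{m\geq 1}\mathfrak{FL}_m$ is graded by the length of bracket monomials, the lower central series satisfies $\mathfrak{FL}_X^{m}=\bigoplus_{j\geq m}\mathfrak{FL}_j$ (this is the standard fact that the free Lie algebra is generated in degree one, so brackets of length $j$ span $\mathfrak{FL}_X^{j}$), and hence the quotient ideal $\mathfrak{FL}_X^{t+1}=\bigoplus_{j\geq t+1}\mathfrak{FL}_j$ is homogeneous. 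Therefore $\mathcal{N}_{d,t}\cong\bigoplus_{m=1}^{t}\mathfrak{FL}_m$ as a graded vector space, and the bracket respects the grading, so $\mathcal{N}_{d,t}$ is graded. Nilpotency of index exactly $t$ then follows because $\mathcal{N}_{d,t}^{t}$ is the image of $\mathfrak{FL}_t$, which is nonzero (the free Lie algebra has nonzero homogeneous components in every degree, e.g.\ by Witt's dimension formula, or simply because $[x_1,x_2,\dots,x_2]$ with the appropriate bracketing is a nonzero element), while $\mathcal{N}_{d,t}^{t+1}=0$ by construction. Type $d$ follows since $\mathcal{N}_{d,t}/\mathcal{N}_{d,t}^{2}\cong\mathfrak{FL}_1=\mathfrak{m}$, which is $d$-dimensional.

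For the universal property, let $N$ be any $t$-nilpotent Lie algebra of type $d$. Choose a basis $\bar{y}_1,\dots,\bar{y}_d$ of $N/N^{2}$ and lift it to elements $y_1,\dots,y_d\in N$. By the universal property of the free Lie algebra $\mathfrak{FL}_X$ (with $X=\{x_1,\dots,x_d\}$), the assignment $x_i\mapsto y_i$ extends uniquely to a Lie algebra homomorphism $\varphi:\mathfrak{FL}_X\to N$. Since $N$ is $t$-nilpotent, $\varphi(\mathfrak{FL}_X^{t+1})\subseteq N^{t+1}=0$, so $\varphi$ factors through the quotient to give a homomorphism $\bar{\varphi}:\mathcal{N}_{d,t}\to N$. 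It remains to check $\bar{\varphi}$ is surjective: its image is a subalgebra of $N$ containing $y_1,\dots,y_d$, hence containing a set of representatives of a basis of $N/N^{2}$; a standard lower-central-series induction (if a subalgebra $M\subseteq N$ satisfies $M+N^{2}=N$, then $N^{k}\subseteq M+N^{k+1}$ for all $k$, and since $N^{t+1}=0$ this forces $M=N$) then gives surjectivity.

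The main obstacle, such as it is, lies in being careful about the structural facts on free Lie algebras that one invokes — specifically that $\mathfrak{FL}_X^{m}=\bigoplus_{j\geq m}\mathfrak{FL}_j$ and that each $\mathfrak{FL}_m\neq 0$ for $m\leq t$ (equivalently $\mathcal{N}_{d,t}^{t}\neq 0$, which is what makes the nilindex \emph{exactly} $t$ rather than at most $t$). Both are classical — they follow from the theory of the free Lie algebra as a graded Lie algebra generated in degree one, together with the Poincaré--Birkhoff--Witt theorem and Witt's formula for $\dim\mathfrak{FL}_m$ — and for $d\geq 1$ one has $\dim\mathfrak{FL}_m\geq 1$ always, so the nilindex claim is safe. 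Since the proposition is quoted from \cite{Sa70} and \cite{Ga73}, I would phrase the proof as a reduction to these two universal properties (of $\mathfrak{FL}_X$ and of quotients), invoking the grading and PBW facts by citation rather than reproving them; the only genuinely new content is the factorization argument and the surjectivity induction, both of which are routine.
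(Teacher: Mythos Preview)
Your proof is correct and matches the paper's approach: the paper does not supply its own argument but simply cites \cite{Sa70} and \cite{Ga73} (marking the proposition with $\square$), and the paragraph immediately following sketches exactly your factorization-through-the-quotient map $\theta_G$. One minor slip worth fixing: your closing claim that $\dim\mathfrak{FL}_m\geq 1$ for all $d\geq 1$ is false when $d=1$ (the free Lie algebra on one generator is abelian, so $\mathfrak{FL}_m=0$ for $m\geq 2$); the nilindex-exactly-$t$ assertion requires $d\geq 2$, which is in any case implicit throughout the paper whenever $t\geq 2$.
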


In fact, for a given t-nilpotent Lie algebra $\mathfrak{n}$ of type $d$ with (minimal) generating set $G=\{e_1,\dots,e_d\}$, the correspondence $x_i\mapsto e_i$ extends naturally to the surjective homomorphism of Lie algebras $\theta_G:\mathcal{N}_{d,t}\to \mathfrak{n}$:
\begin{eqnarray}\label{tetaG}
\theta_G([x_{i_1},\dots, x_{i_m}])=[e_{i_1},e_{i_2},\dots,e_{i_m}].
\end{eqnarray}
Thus, $\mathfrak{n}$ is viewed as the quotient Lie algebra $\mathcal{N}_{d,t}/Ker\, \theta_G$ with $\mathcal{N}_{d,t}^t\not\subseteq Ker\, \theta_G\subseteq \mathcal{N}_{d,t}^2$. From now on, we will refer to $\mathcal{N}_{d,t}$ as \emph{the free nilpotent Lie algebra of $t$-nilindex generated by $\mathfrak{m}=span<x_1,\dots,x_d>$} and to $\theta_G$ as the \emph{natural homomorphism} of $\mathcal{N}_{d,t}$ onto $\mathfrak{n}$ induced by $G$.

On the other hand, from \cite[Proposition 2]{Sa70}, any linear map $\delta:\mathfrak{m}\to \mathcal{N}_{d,t}$ can be extended uniquely to a derivation $\widehat{d}$ of $\mathcal{N}_{d,t}$ by defining:
\begin{eqnarray}\label{extension}\widehat{\delta}([x_{i_1},x_{i_2},\dots,x_{i_j},\dots x_{i_s}])=\sum_{j=1}^s[x_{i_1},x_{i_2},\dots,\delta(x_{i_j}),\dots x_{i_s}].
\end{eqnarray}The maps $\widehat{\delta}$, let us define the monomorphism of Lie algebras:
\begin{eqnarray}\label{delta}
\Delta:\mathfrak{gl}(\mathfrak{m})\to Der\, \mathcal{N}_{d,t}\ ,\ \delta\mapsto \widehat{\delta}.
\end{eqnarray}
Hence the derivation algebra of the free nilpotent Lie algebra generated by $\mathfrak{m}$ can be described as:$$
Der\, \mathcal{N}_{d,t} = \{\widehat{\delta}: \delta\in  \mathfrak{sl}(\mathfrak{m})\}\ \oplus k\cdot \widehat{id_\mathfrak{m}}\ \oplus\ \{\widehat{\delta}:\delta \in Hom(\mathfrak{m},\mathcal{N}(d,t)^2)\}.$$Note that $ \mathfrak{R}_{d,t}=k\cdot \widehat{id_\mathfrak{m}}\ \oplus\ \{\widehat{\delta}:\delta \in Hom(\mathfrak{m},\mathcal{N}(d,t)^2)\}$ is the solvable radical of the derivation algebra and $\mathfrak{N}_{d,t}=\{\hat{\delta}:\delta \in Hom(\mathfrak{m},\mathcal{N}(d,t)^2)\}$ is a nilpotent ideal (we also note that $[\widehat{id_\mathfrak{m}},\delta]=\delta$ for any $\delta\in \mathfrak{N}_{d,t}$). Moreover, $\mathfrak{sl}(\mathfrak{m})^\sharp=\{\widehat{\delta}: \delta\in  \mathfrak{sl}(\mathfrak{m})\}$ is a Levi subalgebra of $Der\, \mathcal{N}_{d,t}$ isomorphic to the special linear algebra $\mathfrak{sl}(\mathfrak{m})$, a simple Lie algebra of Cartan  type $A_{d-1}$). Then:
 
\begin{prop}\label{free nilpotent}Any faithful Lie algebra of characteristic zero with radical the free nilpotent algebra $\mathcal{N}_{d,t}$ generated by $\mathfrak{m}$ is isomorphic to a Lie algebra of the form $S\oplus_{id}\mathcal{N}_{d,t}$ where $S$ is a semisimple subalgebra of the special linear algebra $\mathfrak{sl}(\mathfrak{m})^\sharp=\{\delta\in Der\, \mathcal{N}_{d,t}: \delta\mid_\mathfrak{m} \in  \mathfrak{sl}(\mathfrak{m})\}$. \hfill $\square$
 \end{prop}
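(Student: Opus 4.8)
The plan is to combine Theorem \ref{OnKh75} (the Onishchik--Khakimdzhanov description of faithful Lie algebras with a fixed radical) with the explicit structure of $Der\, \mathcal{N}_{d,t}$ derived just above the statement. By Theorem \ref{OnKh75}, any faithful Lie algebra with radical $\mathcal{N}_{d,t}$ is isomorphic to a subalgebra of the universal algebra $L_0=S_0\oplus_{id}\mathcal{N}_{d,t}$ of the form $S\oplus_{id}\mathcal{N}_{d,t}$ with $S$ a semisimple subalgebra of $S_0$, where $S_0$ is any Levi factor of $Der\, \mathcal{N}_{d,t}$. So the whole content of the proposition reduces to the identification of a convenient Levi factor $S_0$ of $Der\, \mathcal{N}_{d,t}$, namely $\mathfrak{sl}(\mathfrak{m})^\sharp=\{\widehat{\delta}:\delta\in\mathfrak{sl}(\mathfrak{m})\}$, and to checking that the description $\mathfrak{sl}(\mathfrak{m})^\sharp=\{\delta\in Der\,\mathcal{N}_{d,t}:\delta\mid_\mathfrak{m}\in\mathfrak{sl}(\mathfrak{m})\}$ given in the statement is correct.

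First I would invoke the decomposition $Der\, \mathcal{N}_{d,t}=\mathfrak{sl}(\mathfrak{m})^\sharp\oplus k\cdot\widehat{id_\mathfrak{m}}\oplus\mathfrak{N}_{d,t}$ established immediately before the statement, together with the facts recorded there: $\mathfrak{R}_{d,t}=k\cdot\widehat{id_\mathfrak{m}}\oplus\mathfrak{N}_{d,t}$ is the solvable radical, $\mathfrak{N}_{d,t}$ is a nilpotent ideal, and $\mathfrak{sl}(\mathfrak{m})^\sharp\cong\mathfrak{sl}(\mathfrak{m})$ is semisimple (type $A_{d-1}$). From $[\mathfrak{sl}(\mathfrak{m})^\sharp,\mathfrak{R}_{d,t}]\subseteq\mathfrak{R}_{d,t}$ (which follows because $\Delta$ is a Lie algebra homomorphism and $\mathfrak{N}_{d,t}$ is an ideal on which $\widehat{id_\mathfrak{m}}$ acts as the identity) one sees that $\mathfrak{sl}(\mathfrak{m})^\sharp$ is a semisimple complement to the solvable radical, hence a Levi factor. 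Then I would justify the alternative characterization: if $\delta\in Der\,\mathcal{N}_{d,t}$ has $\delta\mid_\mathfrak{m}\in\mathfrak{sl}(\mathfrak{m})$, then by the uniqueness in \cite[Proposition 2]{Sa70} (formula (\ref{extension})) $\delta$ is the unique derivation extending $\delta\mid_\mathfrak{m}$, namely $\widehat{\delta\mid_\mathfrak{m}}\in\mathfrak{sl}(\mathfrak{m})^\sharp$; conversely every element of $\mathfrak{sl}(\mathfrak{m})^\sharp$ restricts to $\mathfrak{m}$ inside $\mathfrak{sl}(\mathfrak{m})$ by construction. Here one uses that $\mathfrak{m}$ is a complement of $\mathcal{N}_{d,t}^2$ mapping isomorphically onto $\mathcal{N}_{d,t}/\mathcal{N}_{d,t}^2$, so that ``$\delta\mid_\mathfrak{m}$'' is unambiguous.

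Finally, applying Theorem \ref{OnKh75} with this choice $S_0=\mathfrak{sl}(\mathfrak{m})^\sharp$ yields exactly the claim: every faithful Lie algebra with radical $\mathcal{N}_{d,t}$ is isomorphic to $S\oplus_{id}\mathcal{N}_{d,t}$ for $S$ a semisimple subalgebra of $\mathfrak{sl}(\mathfrak{m})^\sharp$. I do not expect any serious obstacle: the only mild point of care is that the statement's set-theoretic description $\{\delta\in Der\,\mathcal{N}_{d,t}:\delta\mid_\mathfrak{m}\in\mathfrak{sl}(\mathfrak{m})\}$ genuinely equals $\mathfrak{sl}(\mathfrak{m})^\sharp$ rather than something larger, which is exactly what the uniqueness of the derivation extension in (\ref{extension}) guarantees — a derivation that is scalar (resp.\ zero) on $\mathfrak{m}$ cannot lie in this set unless the scalar is $0$, and a derivation agreeing with a traceless map on $\mathfrak{m}$ is forced to be its canonical extension.
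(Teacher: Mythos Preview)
Your proposal is correct and follows exactly the approach the paper intends: the proposition is marked with a terminal $\square$ and no proof, because it is meant as an immediate consequence of the decomposition $Der\,\mathcal{N}_{d,t}=\mathfrak{sl}(\mathfrak{m})^\sharp\oplus\mathfrak{R}_{d,t}$ established just above together with Theorem~\ref{OnKh75}. You have simply made explicit the two points the paper leaves tacit, namely that $\mathfrak{sl}(\mathfrak{m})^\sharp$ is a Levi factor of $Der\,\mathcal{N}_{d,t}$ and that the uniqueness in (\ref{extension}) forces the set-theoretic description in the statement to coincide with $\{\widehat{\delta}:\delta\in\mathfrak{sl}(\mathfrak{m})\}$.
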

Now, using the ideas in \cite[Section 2]{Sa70}, we will stablish an alternative way of finding all the Levi extensions of $\mathcal{N}_{d,t}$ and its homomorphic images instead of looking for semisimple subalgebras of $\mathfrak{sl}(\mathfrak{m})$.

\begin{lem}\label{alternative way} Let $\mathcal{N}$ be a free nilpotent algebra of characteristic zero generated by $\mathfrak{m}$. Then, $\mathcal{N}$ admits a Levi extension $S$ if and only if the vector space $\mathfrak{m}$ admits a $S$-module structure.
  \end{lem}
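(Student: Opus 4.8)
The plan is to prove the equivalence in both directions using the description of $Der\,\mathcal{N}_{d,t}$ obtained above together with the monomorphism $\Delta:\mathfrak{gl}(\mathfrak{m})\to Der\,\mathcal{N}_{d,t}$ from (\ref{delta}). For the easy direction, suppose $\mathcal{N}=\mathcal{N}_{d,t}$ admits a Levi extension $S$; by definition this means there is a representation $\rho:S\to\mathfrak{gl}(\mathcal{N})$ with $\rho(S)\subseteq Der\,\mathcal{N}$, and by Proposition \ref{free nilpotent} (or directly from Theorem \ref{OnKh75}) we may take $\rho=id$ with $S$ a semisimple subalgebra of $\mathfrak{sl}(\mathfrak{m})^\sharp=\{\widehat\delta:\delta\in\mathfrak{sl}(\mathfrak{m})\}$. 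Composing the inclusion $S\hookrightarrow\mathfrak{sl}(\mathfrak{m})^\sharp$ with $\Delta^{-1}$ (which makes sense since $\mathfrak{sl}(\mathfrak{m})^\sharp=\Delta(\mathfrak{sl}(\mathfrak{m}))$) gives a Lie algebra homomorphism $S\to\mathfrak{sl}(\mathfrak{m})\subseteq\mathfrak{gl}(\mathfrak{m})$, i.e. an $S$-module structure on $\mathfrak{m}$.

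Conversely, suppose $\mathfrak{m}$ carries an $S$-module structure, given by a representation $\sigma:S\to\mathfrak{gl}(\mathfrak{m})$. Since $S$ is semisimple, $S=[S,S]$, so $\sigma(S)=\sigma([S,S])\subseteq[\mathfrak{gl}(\mathfrak{m}),\mathfrak{gl}(\mathfrak{m})]\subseteq\mathfrak{sl}(\mathfrak{m})$; hence $\sigma$ actually lands in $\mathfrak{sl}(\mathfrak{m})$. Now set $\rho=\Delta\circ\sigma:S\to Der\,\mathcal{N}_{d,t}$. This is a composition of Lie algebra homomorphisms, hence a representation of $S$ on $\mathcal{N}_{d,t}$ with $\rho(S)\subseteq\Delta(\mathfrak{sl}(\mathfrak{m}))=\mathfrak{sl}(\mathfrak{m})^\sharp\subseteq Der\,\mathcal{N}_{d,t}$. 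By Lemma \ref{lem11} (or directly Definition \ref{leviextension}), $S$ is a Levi extension of $\mathcal{N}_{d,t}$, which finishes this direction.

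I do not expect a serious obstacle here: the statement is essentially a repackaging of (\ref{extension})--(\ref{delta}) and Proposition \ref{free nilpotent}, and the only point requiring a moment's care is the observation that a representation of a semisimple $S$ on $\mathfrak{m}$ automatically has trace-zero image, so that it factors through $\mathfrak{sl}(\mathfrak{m})$ rather than merely $\mathfrak{gl}(\mathfrak{m})$ — this is what allows us to land inside the Levi factor $\mathfrak{sl}(\mathfrak{m})^\sharp$ rather than picking up a spurious piece of the radical $\mathfrak{R}_{d,t}$. One should also note, for the intended application to homomorphic images $\mathfrak{n}=\mathcal{N}_{d,t}/\mathrm{Ker}\,\theta_G$, that $\mathrm{Ker}\,\theta_G$ being a graded (homogeneous) ideal is automatically stable under every $\widehat\delta\in\mathfrak{sl}(\mathfrak{m})^\sharp$ when it is stable under $\sigma(S)$ on the degree-one part, so $\rho$ descends; but that is the content of the next results rather than of this lemma, so I would only remark on it in passing if at all.
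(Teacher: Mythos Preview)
Your converse direction is essentially the paper's: compose a representation $\sigma:S\to\mathfrak{gl}(\mathfrak{m})$ with the monomorphism $\Delta$ to land in $Der\,\mathcal{N}$. (The trace-zero observation is correct but not needed here, since $\Delta$ is defined on all of $\mathfrak{gl}(\mathfrak{m})$ and already lands in $Der\,\mathcal{N}$; whether the image sits in the Levi factor $\mathfrak{sl}(\mathfrak{m})^\sharp$ or picks up a piece of $\mathfrak{R}_{d,t}$ is irrelevant to the conclusion of the lemma.)

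For the forward direction your route differs from the paper's and carries a small gap. You invoke Proposition~\ref{free nilpotent} (equivalently Theorem~\ref{OnKh75}) to normalize $\rho$ to the identity inclusion of $S$ into $\mathfrak{sl}(\mathfrak{m})^\sharp$, but both of those results are stated only for \emph{faithful} Lie algebras with radical $\mathcal{N}$, i.e.\ for faithful $\rho$. The lemma makes no such hypothesis: a Levi extension is any representation $\rho:S\to\mathfrak{gl}(\mathcal{N})$ with image in $Der\,\mathcal{N}$, and $\rho$ may have a nontrivial kernel. This is easy to patch (pass to $S/\ker\rho$, apply your argument, then let $\ker\rho$ act trivially on $\mathfrak{m}$), but the paper sidesteps the issue entirely. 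It argues directly: $\mathcal{N}^2$ is characteristic, hence a $\rho(S)$-submodule, so by complete reducibility $\mathcal{N}=\mathcal{N}^2\oplus V$ with $V$ an $S$-module; since $V$ and $\mathfrak{m}$ are equidimensional, transport the structure. This uses neither faithfulness nor the classification of faithful algebras with radical $\mathcal{N}_{d,t}$, and so is both shorter and logically prior to Proposition~\ref{free nilpotent}.
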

  
\begin{proof}For a given representation $\rho$ such that $\rho(S)\subseteq Der\, \mathcal{N}$, since  $\mathcal{N}^2$ is a $\rho(S)$-module, by complete reducibility, we can decompose $\mathcal{N}=\mathcal{N}^2\oplus V$ for some $S$-module $V$; note that, since $V$ generates $\mathcal{N}$, for any faithful extension, $V$ is a faithful module. Now, $\mathfrak{m}$ and $V$ are equidimensional and therefore $\mathfrak{m}$ can be viewed as $S$-module. For the converse, let $\rho$ be a representation of $\mathfrak{m}$; the map $\Delta \circ \rho$, with $\Delta$ as in (\ref{delta}), is a representation of $\mathcal{N}$ such that $\Delta \circ \rho(S)\subseteq Der\, \mathcal{N}$ and therefore the result follows.
 \end{proof}

\begin{prop}\label{existenceequivalence}Let $\mathcal{N}_{d,t}$ be the free t-nilpotent algebra of characteristic zero generated by $\mathfrak{m}=span<x_1,\dots,x_d>$ and $\mathfrak{n}$ be an arbitrary  t-nilpotent Lie algebra of type d. Then, the following assertions are equivalent:
\begin{enumerate}
\item [a)] $S$ is a Levi extension of $\mathfrak{n}$;
\item [b)] $\mathcal{N}_{d,t}$ admits $S$ as a Levi extension and there exists a minimal generating set $G$ of $\mathfrak{n}$ for which the natural Lie algebra homomorphism $ \theta_G$ is a $S$-module homomorphism. 
\end{enumerate}
\end{prop}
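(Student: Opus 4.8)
The plan is to prove the equivalence by exploiting the natural homomorphism $\theta_G\colon\mathcal{N}_{d,t}\to\mathfrak{n}$ from (\ref{tetaG}) as the bridge between the two conditions, and to use Lemma \ref{alternative way} to reduce everything about $\mathcal{N}_{d,t}$ to the existence of an $S$-module structure on the $d$-dimensional space $\mathfrak{m}$.

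\medskip
\textit{Direction $b)\Rightarrow a)$.} Suppose $\mathcal{N}_{d,t}$ admits $S$ as Levi extension via some representation $\widehat{\rho}$, and that for a suitable minimal generating set $G=\{e_1,\dots,e_d\}$ the natural homomorphism $\theta_G$ is an $S$-module map. Then $Ker\,\theta_G$ is an $S$-submodule of $\mathcal{N}_{d,t}$; since it is also an ideal of $\mathcal{N}_{d,t}$ with $\mathcal{N}_{d,t}^t\not\subseteq Ker\,\theta_G\subseteq\mathcal{N}_{d,t}^2$, the quotient $\mathfrak{n}\cong\mathcal{N}_{d,t}/Ker\,\theta_G$ inherits a representation $\bar\rho$ of $S$, and the fact that $\widehat{\rho}(S)\subseteq Der\,\mathcal{N}_{d,t}$ passes to the quotient, i.e.\ $\bar\rho(S)\subseteq Der\,\mathfrak{n}$ (a derivation of $\mathcal{N}_{d,t}$ preserving a characteristic ideal induces a derivation on the quotient; one checks $\bar\rho(s)$ satisfies the Leibniz rule directly on cosets). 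By Lemma \ref{lem11} this is exactly the statement that $S$ is a Levi extension of $\mathfrak{n}$.

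\medskip
\textit{Direction $a)\Rightarrow b)$.} Suppose $S$ is a Levi extension of $\mathfrak{n}$ via $\rho$ with $\rho(S)\subseteq Der\,\mathfrak{n}$. Since $\mathfrak{n}^2$ is a characteristic ideal it is a $\rho(S)$-submodule; by complete reducibility of $S$-modules write $\mathfrak{n}=W\oplus\mathfrak{n}^2$ for some $S$-submodule $W$, necessarily of dimension $d=\dim\mathfrak{n}/\mathfrak{n}^2$. Pick a basis $G=\{e_1,\dots,e_d\}$ of $W$; this is a minimal generating set of $\mathfrak{n}$, and $\theta_G\colon\mathcal{N}_{d,t}\to\mathfrak{n}$ is the associated natural homomorphism. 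Transport the $S$-module structure of $W$ to $\mathfrak{m}=\spann{x_1,\dots,x_d}$ via $x_i\mapsto e_i$, giving a representation $\sigma\colon S\to\mathfrak{gl}(\mathfrak{m})$; by Lemma \ref{alternative way}, $\mathcal{N}_{d,t}$ then admits $S$ as a Levi extension, realized concretely as $\Delta\circ\sigma$ with $\Delta$ as in (\ref{delta}). It remains to check that $\theta_G$ intertwines the $S$-actions. This is the key step: by (\ref{tetaG}) and (\ref{extension}), for a bracket monomial $[x_{i_1},\dots,x_{i_m}]$ one has $\theta_G\bigl((\Delta\circ\sigma)(s)[x_{i_1},\dots,x_{i_m}]\bigr)=\sum_{j}[e_{i_1},\dots,\sigma(s)(x_{i_j}),\dots,e_{i_m}]$ (using that $\theta_G$ is an algebra map and $\sigma(s)(x_{i_j})\in\mathfrak{m}$ maps to $\rho(s)(e_{i_j})\in W$), while $\rho(s)\bigl([e_{i_1},\dots,e_{i_m}]\bigr)=\sum_{j}[e_{i_1},\dots,\rho(s)(e_{i_j}),\dots,e_{i_m}]$ because $\rho(s)$ is a derivation of $\mathfrak{n}$; these agree by the choice $\sigma(s)(x_{i})\leftrightarrow\rho(s)(e_i)$. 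Since such monomials span $\mathcal{N}_{d,t}$, $\theta_G$ is an $S$-module homomorphism, giving $b)$.

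\medskip
The main obstacle I anticipate is purely bookkeeping rather than conceptual: one must be careful that the $S$-module structure produced on $\mathfrak{m}$ (equivalently on the complement $W$) is genuinely compatible with $\theta_G$ on all of $\mathcal{N}_{d,t}$, not just on generators — this is where the derivation property of $\rho(s)$ on $\mathfrak{n}$ and the universal extension property (\ref{extension}) of derivations of $\mathcal{N}_{d,t}$ must be matched term by term. A secondary point worth stating explicitly is that $Ker\,\theta_G$ being simultaneously an ideal and an $S$-submodule is exactly what makes $\mathfrak{n}$ a Levi extension of $S$ in the quotient; no faithfulness hypothesis is needed for the equivalence, although by Proposition \ref{basiconLevi} faithfulness comes for free in the indecomposable case.
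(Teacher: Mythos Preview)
Your proof is correct and follows essentially the same approach as the paper: for $a)\Rightarrow b)$ you both split $\mathfrak{n}=W\oplus\mathfrak{n}^2$ by complete reducibility, transport the $S$-action on $W$ to $\mathfrak{m}$, extend via $\Delta$, and verify the intertwining relation $\theta_G\circ\Delta(\sigma(s))=\rho(s)\circ\theta_G$ on bracket monomials using that $\rho(s)$ is a derivation of $\mathfrak{n}$. Your treatment of $b)\Rightarrow a)$ via the quotient by the $S$-stable ideal $Ker\,\theta_G$ is exactly what the paper leaves as ``the converse is clear.''
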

\begin{proof} Assume firstly $S$ be a Levi extension of $\mathfrak{n}$ attached to the representation $\rho$, so $\rho(s)\in Der\, \mathfrak{n}$ for $s\in S$. By complete reducibility, we can decompose $\mathfrak{n}=\mathfrak{n}^2\oplus V$ for some $S$-module $V$. Take as generator set of $\mathfrak{n}$ any basis $G=\{e_1,\dots,e_d\}$ of $V$. Consider the natural homomorphism $\theta_G$ from $\mathcal{N}_{d,t}$ onto $\mathfrak{n}$ given by $\theta_G(x_i)=e_i$. The one-one linear map $\theta_{\mathfrak{m}}:\mathfrak{m} \to V$ given by $\theta_{\mathfrak{m}}(a)=\theta_G(a)$, lets define the representation $\rho':S\to \mathfrak{gl}(\mathfrak{m})$ by means of $\rho'(s)=\theta_{\mathfrak{m}}^{-1}\rho(s)\mid_V\theta_{\mathfrak{m}}$. Now, the homomorphism $\Delta \circ \rho'$, where $\Delta$ is as in (\ref{delta}), provides a representation of $\mathcal{N}_{d,t}$ such that $\Delta \circ \rho'(S)\subset Der\, \mathcal{N}_{d,t}$. For this representation we have:\[
\begin{array}{ll}
\theta_G\circ \Delta(\rho'(s))([x_{i_1},x_{i_2},\dots, x_{i_k}])=&\\
\ \ \sum_{j=1}^k[\theta_G(x_{i_1}),\theta_G(x_{i_2}),\dots,\rho(s)(\theta_G(x_{i_j})),\dots, \theta_G(x_{i_k})]=&\\
\ \ \rho(s)([\theta_G(x_{i_1}),\theta_G(x_{i_2}),{\footnotesize \dots},\theta_G(x_{i_j}),{\footnotesize \dots}, \theta_G(x_{i_k})])=\rho(s)\circ \theta_G([x_{i_1},x_{i_2},{\footnotesize \dots}, x_{i_k}]).&\\
\end{array}\]So $\theta_G\circ \Delta(\rho'(s))=\rho(s)\circ \theta_G$ which proves $b)$. The converse is clear.
\end{proof}

From previous discussion, we can state our main result:

\begin{thm}\label{alternativewayII} Let $S$ be a semisimple Lie algebra, $\mathfrak{m}$ a $d$-dimensional $S$-module given by the representation $\rho$ and $\mathcal{N}_{d,t}$ the free $t$-nilpotent Lie algebra generated by $\mathfrak{m}$. The $t$-nilpotent Lie algebras of type $d$ admitting $S$ as Levi extension are of the form $\mathcal{N}_{d,t}/I$ where $\mathcal{N}_{d,t}$ and the ideal $I$ are $S$-modules through the representation $\Delta\circ\rho$, with $\Delta$ as in (\ref{delta}) and $\mathcal{N}_{d,t}^t\not\subseteq I \subseteq \mathcal{N}_{d,t}^2$. In this case, $\mathcal{N}_{d,t}/I$ is regarded as a quotient $S$-module. Moreover, $\mathcal{N}_{d,t}/I$ is a faithful module if and only if $\mathfrak{m}$ also is.
 \end{thm}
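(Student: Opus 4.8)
\medskip
\noindent\textbf{Proof proposal.} The plan is to reduce the whole statement to Proposition~\ref{existenceequivalence}, to the graded structure of $\mathcal{N}_{d,t}$, and to the elementary properties of the natural homomorphism $\theta_G$ and of the extension map $\Delta$ of~(\ref{delta}); no new idea beyond these should be needed, only careful bookkeeping.

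First I would treat the direction ``$S$ a Levi extension of $\mathfrak{n}$ $\Rightarrow$ $\mathfrak{n}$ is a quotient of $\mathcal{N}_{d,t}$''. Given a $t$-nilpotent $\mathfrak{n}$ of type $d$ with Levi extension $S$, identify $\mathfrak{m}$ with the $S$-module $\mathfrak{n}/\mathfrak{n}^2$; complete reducibility yields an $S$-submodule $V\subseteq\mathfrak{n}$ complementing $\mathfrak{n}^2$, and any basis $G$ of $V$ is a minimal generating set with $V\cong\mathfrak{m}$. By Proposition~\ref{existenceequivalence}, $\mathcal{N}_{d,t}$ with the module structure $\Delta\circ\rho$ admits $S$ as a Levi extension and $\theta_G\colon\mathcal{N}_{d,t}\to\mathfrak{n}$ is simultaneously a Lie algebra epimorphism and an $S$-module homomorphism. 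Setting $I=\ker\theta_G$, this $I$ is an ideal and an $S$-submodule, and the facts recalled after~(\ref{tetaG}) give $\mathcal{N}_{d,t}^t\not\subseteq I\subseteq\mathcal{N}_{d,t}^2$; the induced map $\mathcal{N}_{d,t}/I\to\mathfrak{n}$ is then at once a Lie algebra isomorphism and an $S$-module isomorphism, so $\mathfrak{n}$ has the asserted form and $\mathcal{N}_{d,t}/I$ is the corresponding quotient $S$-module.

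For the converse I would start from an ideal $I$ which is also an $S$-submodule for $\Delta\circ\rho$ with $\mathcal{N}_{d,t}^t\not\subseteq I\subseteq\mathcal{N}_{d,t}^2$, set $\mathfrak{n}=\mathcal{N}_{d,t}/I$, and verify three points. Since $I\subseteq\mathcal{N}_{d,t}^2$, the composite $\mathfrak{m}\hookrightarrow\mathcal{N}_{d,t}\twoheadrightarrow\mathfrak{n}$ is injective, so $\mathfrak{n}$ is of type $d$; together with $\mathcal{N}_{d,t}^{t+1}=0$ and $\mathcal{N}_{d,t}^t\not\subseteq I$ this makes $\mathfrak{n}$ exactly $t$-nilpotent. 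Since $I$ is $\Delta\circ\rho(S)$-invariant, every $\widehat{\delta}\in\Delta\circ\rho(S)\subseteq Der\,\mathcal{N}_{d,t}$ descends to a derivation of $\mathfrak{n}$, producing a representation of $S$ into $Der\,\mathfrak{n}$, so by Lemma~\ref{lem11} $S$ is a Levi extension of $\mathfrak{n}$, and $\mathfrak{n}$ is plainly a quotient $S$-module. For the faithfulness clause I would use that $\widehat{\delta_s}=0$ on $\mathcal{N}_{d,t}$ exactly when $\delta_s:=\rho(s)=0$ (because $\mathfrak{m}$ generates $\mathcal{N}_{d,t}$ and $\Delta$ is a monomorphism by~(\ref{extension})--(\ref{delta})): if $\mathfrak{m}$ is faithful and $s$ annihilates $\mathcal{N}_{d,t}/I$, then $\delta_s(\mathfrak{m})\subseteq I\cap\mathfrak{m}\subseteq\mathcal{N}_{d,t}^2\cap\mathfrak{m}=0$, hence $s=0$; conversely if $\mathcal{N}_{d,t}/I$ is faithful and $\delta_s=0$, then $\widehat{\delta_s}=0$ on $\mathcal{N}_{d,t}$, a fortiori on the quotient, so $s=0$.

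The only delicate point, and it is bookkeeping rather than substance, is to be sure that the $S$-module structure on $\mathcal{N}_{d,t}/I$ obtained by descending $\Delta\circ\rho$ agrees with the one it carries as a Levi extension of $\mathfrak{n}$, i.e.\ that the abstract module $\mathfrak{m}$ really matches $\mathfrak{n}/\mathfrak{n}^2$ and the descended action matches the adjoint action. This matching is exactly what the displayed identity $\theta_G\circ\Delta(\rho'(s))=\rho(s)\circ\theta_G$ in the proof of Proposition~\ref{existenceequivalence} records, so in the present theorem it only has to be invoked.
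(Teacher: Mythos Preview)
Your proposal is correct and follows essentially the same route as the paper. The paper's own proof is terser---it simply says the main assertion follows from Lemma~\ref{alternative way} and Proposition~\ref{existenceequivalence}, and then handles the faithfulness clause exactly as you do, via $I\cap\mathfrak{m}=0$ and the $\Delta\circ\rho$-invariance of $\mathfrak{m}$---but you have just unpacked those two references into the explicit forward and converse arguments, with no substantive deviation.
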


\begin{proof} The result follows from Lemma \ref{alternative way} and Proposition \ref{existenceequivalence}. For the last assertion, note that the representation $\rho'$ of $S$ in $\mathcal{N}_{d,t}/I$ is given by $\rho'(a+I)=\Delta\circ\rho(s)(a)+I$. Since $\mathfrak{m}$ generates $\mathcal{N}_{d,t}$, $\rho'$ is faithful representation if and only if $Ker\, \rho'=\{s\in S:\Delta\circ\rho(s)(a)\in I, \forall_{a\in \mathfrak{m}}\}=0$. But $I\cap \mathfrak{m}=0$ and $\mathfrak{m}$ is a $\Delta\circ\rho$-invariant set. Hence $Ker\, \rho'=ker \, \rho$.
\end{proof}
 \begin{defn}Given a free nilpotent Lie algebra $\mathcal{N}$ admitting the semisimple algebra $S$ as a Levi extension through the representation $\rho$, the ideals of $\mathcal{N}$ which are $S$-modules will be called \emph{$S$-ideals}.
\end{defn}
We remark that $S$-ideals of $\mathcal{N}$ are just the ideals of the Lie algebra $S\oplus_\rho \mathcal{N}$ contained in $\mathcal{N}$.


\section{Low nilindex and Levi extensions}

Previous section stablishes that nilpotent Lie algebras admitting Levi extension(s) can be characterized by modules of semisimple Lie algebras as generating sets of free nilpotent Lie algebras. In low nilindex, we can construct a huge variety of examples, as displayed in Table \ref{ltsadjuntos-so-sl-excep}, using Theorem \ref{alternativewayII}, multilinear algebra and simple Lie algebras and their irreducible modules

Models of free Lie algebras of nilindex $2$ and $3$ can be built by symmetric and skewsymmetric powers of vector spaces. The Lie algebra $\mathcal{N}_{d,2}$ has dimension {\small $\left(\begin{array}{c}d+1\\ 2 \end{array}\right)$} and following \cite[Section 2]{Ga73}, this algebra is isomorphic to:$$\mathcal{N}(\mathfrak{m},2)=\mathfrak{m}\oplus \bigwedge^2\mathfrak{m},$$where $\mathfrak{m}$ is an arbitrary $d$-dimensional vector space with nonzero skewsymmetric product given by $[x,y]=x\wedge y$, for $x,y\in \mathfrak{m}$. Similarly, the dimension of the Lie algebra $N_{d,3}$ is:{\small $$\left(\begin{array}{c}d+1\\ 2 \end{array}\right)+2\left(\begin{array}{c}d+1\\ 3 \end{array}\right)$$}and, up to isomorphisms, it can be realized as follows: the tensor product $\mathfrak{m}\otimes \bigwedge^2 \mathfrak{m}$ decomposes as the direct sum of the vector spaces $\mathfrak{t}$ and $\mathfrak{s}$ where$$\mathfrak{t}=\mathrm{span}<x\otimes(y\wedge z)+\ y\otimes(z\wedge x)+\ z\otimes(x\wedge y):x,\ y,\ z\in\mathfrak{m}>\cong\bigwedge^3\mathfrak{m}$$and$$
\mathfrak{s}=\mathrm{span}<2x\otimes (y\wedge z) +y\otimes (x\wedge z) +z\otimes (y\wedge x):x,\ y,\ z\in\mathfrak{m}>\cong\frac{\mathfrak{m}\otimes\bigwedge^2 \mathfrak{m}}{\mathfrak{t}}.$$In fact:\[
\begin{array}{ccc}
  x\otimes(y\wedge z)&=&  \overbrace{\frac{1}{3}(x\otimes(y\wedge z)+\ y\otimes(z\wedge x)+\ z\otimes(x\wedge y))}^{\mathfrak{t}} +\\
  & & \overbrace{\frac{1}{3}(2x\otimes (y\wedge z) +y\otimes (x\wedge z) +z\otimes (y\wedge x)).}^{\mathfrak{s}}
  \end{array}
\]Then, the vector space$$\mathcal{N}(\mathfrak{m},3)=\mathfrak{m}\oplus \bigwedge^2 \mathfrak{m}\oplus \mathfrak{s}
$$with skewsymmetric product given, for $x,y,z\in \mathfrak{m}$, by $[x,y]=x\wedge y$ and$$[x,y\wedge z]=\frac{2}{3}x\otimes (y\wedge z) +\frac{1}{3}y\otimes (x\wedge z) +\frac{1}{3}z\otimes (y\wedge x)$$(all other products being zero) is the free $3$-nilpotent Lie algebra of type $d$. 

Now assume $\mathfrak{m}$ is a faithful $S$-module for some semisimple Lie algebra $S$. Without loss of generality, $S$ can be viewed as a Lie subalgebra of $\mathfrak{sl}(\mathfrak{m})$ and $\mathfrak{m}$ as the module given by the representation $\rho=id$. Following Theorem \ref{alternativewayII}, the free nilpotent algebras of nilindex $2$ and $3$ inherit the module structure of $\mathfrak{m}$ by means of the derivations displayed in (\ref{extension}). In this case, for each $\delta\in S$, the derivation extension of a linear map $\delta:\mathfrak{m}\to\mathfrak{m}$ on $\bigwedge^2 \mathfrak{m}$ and $\mathfrak{s}$ is given by:
\begin{enumerate}
\item[a)] $\widehat{d}(x)=\delta(x)$,
\item[b)]$\widehat{d}([x,y])=\widehat{\delta}(x\wedge y)=\delta(x)\wedge y+x\wedge \delta(y)$
\end{enumerate}and 
\begin{enumerate}
\item[c)] $\widehat{\delta}([x,y\wedge z])=\widehat{\delta}( x\otimes(y\wedge z))= \frac{2}{3}(\delta(x)\otimes(y\wedge z)+\ x\otimes(\delta(y)\wedge z)+$
\item[] $\ x\otimes(y\wedge \delta(z))) +\frac{1}{3}(\delta(y)\otimes (x\wedge z) +y\otimes (\delta(x)\wedge z) +y\otimes (x\wedge \delta(z))) +$
\item[] $\frac{1}{3}(\delta(z)\otimes (y\wedge x) +z\otimes (\delta(y)\wedge x) +z\otimes (y\wedge \delta(x))),$
\end{enumerate}
for $x,y,z\in\mathfrak{m}$. So $\bigwedge^2\mathfrak{m}$ and $\mathfrak{s}$ are the modules provided by the natural action of $S$ on $\otimes^n\mathfrak{m}$ with $n=2,3$:\[
\begin{array}{lll}\label{moducondition}
 \widehat{\delta}( x\otimes y)&=& \delta(x)\otimes y+x\otimes \delta(y)\\
 \widehat{\delta}(x\otimes y\otimes z)&=& \delta(x)\otimes y\otimes z+x\otimes \delta(y)\otimes z+x\otimes y\otimes \delta(z).
\end{array}
\]Then, for $2$ and $3$ nilindex algebras Theorem \ref{alternativewayII} can be reformulated as:

\begin{thm}\label{alternativewaylow}Let $S$ be a semisimple Lie subalgebra of $\mathfrak{sl}(\mathfrak{m})$ where $\mathfrak{m}$ is a vector space of dimension $d$ regarded as the natural module through the representation $\rho=id$. Then, the extended module structure on $\mathcal{N}(\mathfrak{m},2)=\mathfrak{m}\oplus \bigwedge^2\mathfrak{m}$ and $\mathcal{N}(\mathfrak{m},3)=\mathfrak{m}\oplus \bigwedge^2\mathfrak{m}\oplus \mathfrak{s}$ is given by considering $\bigwedge^2\mathfrak{m}$ and $\mathfrak{s}$ as $S$-submodules provided for the natural action of $S$ on $\mathfrak{m}\otimes\mathfrak{m}$ and $\mathfrak{m}\otimes\bigwedge^2\mathfrak{m}$ respectively. Moreover:
\begin{enumerate}
\item[a)] The $2$-nilpotent algebras (also known as metabelian) of type $dim\, \mathfrak{m}$ that admit $S$ as faithful Levi extension are of the form $\displaystyle{\frac{\mathcal{N}(\mathfrak{m},2)}{I}}$ where $I$ is an $S$-submodule of $\bigwedge^2\mathfrak{m}$ other than $\bigwedge^2\mathfrak{m}$.
\item[b)] The $3$-nilpotent algebras of type $dim\, \mathfrak{m}$ that admit $S$ as faithful Levi extension are of the form $\displaystyle{\frac{\mathcal{N}(\mathfrak{m},3)}{J+[J,\mathfrak{m}]}}$ where $J$ is an $S$-module of $\bigwedge^2\mathfrak{m}\oplus \mathfrak{s}$ such that $\mathfrak{s}\not\subseteq J+[J,\mathfrak{m}]$.
\end{enumerate}
\end{thm}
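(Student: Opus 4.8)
The plan is to obtain both assertions as a specialization of Theorem \ref{alternativewayII} to $t=2$ and $t=3$, feeding it the concrete models $\mathcal{N}_{d,2}\cong\mathcal{N}(\mathfrak{m},2)$ and $\mathcal{N}_{d,3}\cong\mathcal{N}(\mathfrak{m},3)$ recalled above. I would first dispose of the module-structure claim, which needs no new argument: under those isomorphisms the extension $\Delta\circ id$ of Theorem \ref{alternativewayII} carries $\delta\in S\leq\mathfrak{sl}(\mathfrak{m})$ to the derivation $\widehat\delta$ of (\ref{extension}), and the formulas in items a)--c) above (together with the two displayed tensor-action identities) show that $\widehat\delta$ acts on $\bigwedge^2\mathfrak{m}$ and on $\mathfrak{s}$ precisely as the natural action of $\delta$ on $\mathfrak{m}\otimes\mathfrak{m}$ and on $\mathfrak{m}\otimes\bigwedge^2\mathfrak{m}$ restricted to these submodules; in particular the brackets of $\mathcal{N}(\mathfrak{m},2)$ and of $\mathcal{N}(\mathfrak{m},3)$ are $S$-equivariant.

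For a) I would note that $\mathcal{N}(\mathfrak{m},2)^2=\bigwedge^2\mathfrak{m}$ and $[\mathcal{N}(\mathfrak{m},2),\bigwedge^2\mathfrak{m}]=0$, so any $S$-submodule $I\subseteq\bigwedge^2\mathfrak{m}$ is automatically an ideal, i.e.\ an $S$-ideal. Theorem \ref{alternativewayII} with $t=2$ then describes the $2$-nilpotent algebras of type $\dim\mathfrak{m}$ admitting $S$ as a faithful Levi extension as the quotients $\mathcal{N}(\mathfrak{m},2)/I$ with $\mathcal{N}(\mathfrak{m},2)^2\not\subseteq I\subseteq\mathcal{N}(\mathfrak{m},2)^2$; since $I\subseteq\bigwedge^2\mathfrak{m}$ already, this amounts to $I\neq\bigwedge^2\mathfrak{m}$, and every such quotient is faithful because $\mathfrak{m}$ is (last clause of Theorem \ref{alternativewayII}).

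For b), writing $N=\mathcal{N}(\mathfrak{m},3)$, I would use $N^2=\bigwedge^2\mathfrak{m}\oplus\mathfrak{s}$, $N^3=\mathfrak{s}$, $N^4=0$, whence $[\mathfrak{m},\mathfrak{s}]=0$ and $[N^2,N^2]=0$. By Theorem \ref{alternativewayII} it remains to show that the $S$-ideals $I$ with $\mathfrak{s}=N^3\not\subseteq I\subseteq N^2$ are exactly the subspaces $J+[J,\mathfrak{m}]$ with $J$ an $S$-submodule of $\bigwedge^2\mathfrak{m}\oplus\mathfrak{s}$ such that $\mathfrak{s}\not\subseteq J+[J,\mathfrak{m}]$. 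Given such a $J$, the subspace $[J,\mathfrak{m}]$ is again an $S$-submodule (the bracket is $S$-equivariant, $S$ acting by derivations) and lies in $[N^2,N]\subseteq N^3=\mathfrak{s}$, so $I:=J+[J,\mathfrak{m}]$ is an $S$-submodule with $I\subseteq N^2$; it is an ideal because $[\mathfrak{m},I]=[\mathfrak{m},J]+[\mathfrak{m},[J,\mathfrak{m}]]\subseteq[J,\mathfrak{m}]+[\mathfrak{m},\mathfrak{s}]\subseteq I$ while $[N^2,I]\subseteq[N^2,N^2]=0$ and $N=\mathfrak{m}+N^2$. Conversely, any $S$-ideal $I\subseteq N^2$ satisfies $[I,\mathfrak{m}]\subseteq I$, so $I=J+[J,\mathfrak{m}]$ with $J=I$. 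Hence the two families coincide; the condition $N^3\not\subseteq I$ becomes $\mathfrak{s}\not\subseteq J+[J,\mathfrak{m}]$, and faithfulness is inherited exactly as in a).

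I expect the only step that is not a purely formal specialization of Theorem \ref{alternativewayII} to be the verification in b) that $J+[J,\mathfrak{m}]$ is an ideal contained in $N^2$, and even that reduces entirely to the two consequences $[\mathfrak{m},\mathfrak{s}]=0$ and $[N^2,N^2]=0$ of $3$-nilpotency, so it should be short. It may also be worth remarking that the assignment $J\mapsto J+[J,\mathfrak{m}]$ is onto the set of $S$-ideals of $N$ lying in $N^2$ but in general not injective, so the description in b) is convenient for producing examples although it is not a bijective parametrisation.
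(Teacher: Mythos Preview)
Your proposal is correct and follows essentially the same route as the paper: specialize Theorem \ref{alternativewayII} to $t=2,3$, observe that in the metabelian case the $S$-ideals contained in $\mathcal{N}(\mathfrak{m},2)^2$ are exactly the $S$-submodules of $\bigwedge^2\mathfrak{m}$, and in the $3$-nilpotent case verify that $J+[J,\mathfrak{m}]$ is an $S$-ideal via $[J+[J,\mathfrak{m}],\mathcal{N}(\mathfrak{m},3)]=[J,\mathfrak{m}]$, with the trivial converse $J=I$. Your write-up is in fact more explicit than the paper's (which compresses the ideal check into the single identity $[J+[J,\mathfrak{m}],N(\mathfrak{m},3)]=[J,\mathfrak{m}]$), and your closing remark on non-injectivity of $J\mapsto J+[J,\mathfrak{m}]$ is a useful addition not present in the original.
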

\begin{proof}The first part of the statement follows from previous discussion. Assetions a) and b) are obtained from Theorem \ref{alternativewayII} and the fact that $S$-ideals inside $\bigwedge^2\mathfrak{m}$ are just $S$-submodules for metabelian algebras. In the $3$-nilpotent case, $S$-ideals are of the form $J+[J,\mathfrak{m}]$ where $J$ is an $S$-submodule of $\bigwedge^2\mathfrak{m}$: given an arbitrary $S$-module $J$, the vector space $[J,\mathfrak{m}]$ spanned by the products $[a,b]$, $a\in J, b\in \mathfrak{m}$ is an $S$-module and  since $[J+[J,\mathfrak{m}],N(\mathfrak{m},3)]=[J,\mathfrak{m}]$, we get that $J+[J,\mathfrak{m}]$ is an ideal. 
\end{proof}

\begin{examp}\label{ex5} The nilradical of the Lie algebra $L_0(\mathfrak{h}_1)=\mathfrak{sp}_2(k)\oplus_{id}\mathfrak{h}_1$ in Proposition \ref{filiformNO} is the free nilpotent $\mathcal{N}_{2,2}$ regarded as module for $\mathfrak{sp}_2(k)$ as follows: $V=k\cdot x\oplus k\cdot y$
is the irreducible $2$-dimensional module $\mathfrak{m}=V(\lambda_1)=V(1)$ for the simple $3$-dimensional Lie algebra $\mathfrak{sp}_2(k)=\mathfrak{sl}_2(k)$. From Clebsh-Gordan formula, $\mathfrak{m}\otimes \mathfrak{m}=V(2)\oplus V(0)$ and $\bigwedge^2 \mathfrak{m}=V(0)=k\cdot x\wedge y$. By setting $z=x\wedge y$, the Lie algebra $L_0(\mathfrak{h}_1)$ appears as $\mathfrak{sp}_2(k)\oplus_{\rho_{V(\lambda_1)}}\mathcal{N}(\mathfrak{m},2)$.

The free $3$-nilpotent of type $2$ is given by$$\mathcal{N}(\mathfrak{m},3)=\mathfrak{m}\oplus \bigwedge^2 \mathfrak{m} \oplus \mathfrak{m}\otimes \bigwedge^2 \mathfrak{m},$$generated by the set $\{x,y,x\wedge y, x\otimes x\wedge y,y\otimes x\wedge y\}$ and with nonzero skewsymmetric products:$$[x,y]=x\wedge y, [x,x\wedge y]=x\otimes x\wedge y, [y,x\wedge y]=y\otimes x\wedge y.
$$The module $\mathfrak{m}=V(\lambda_1)$ for $\mathfrak{sl}_2(k)$ also gives the universal Lie algebra of radical $\mathcal{N}(\mathfrak{m},3)$, as $\mathfrak{sl}_2(k)\oplus_{\rho_{V(\lambda_1)}}\mathcal{N}(\mathfrak{m},3)$ with module decomposition $V(2)\oplus V(1)\oplus V(0)\oplus V(1)$. \hfill $\square$
\end{examp}

\begin{rmk}\label{nota}
In the same vein of previous Example \ref{ex5}, in Table \ref{ltsadjuntos-so-sl-excep} we use irreducible modules of fundamental weights $\lambda_1,\lambda_2$ and any simple Lie algebra $S$ to get a huge variety of series of free nilpotent Lie algebras that admit $S$ as Levi extension. According to Theorem \label{alternativewaylow}, from $S$-modules, we can get many others. The algebras $\mathcal{N}(\mathfrak{m},2)$ and $\mathcal{N}(\mathfrak{m},3)$ appears in Table \ref{ltsadjuntos-so-sl-excep} first row; they have no proper $S$-modules fulfilling the conditions in Theorem \ref{alternativewaylow} (part a)or b)). The Heisenberg algebra $\mathfrak{h}_2$ appears as the quotient $\displaystyle{\frac{\mathcal{N}_{4,2}}{V(\lambda_2)}}$ according to 7th row and $\mathfrak{h}_n$ as $\displaystyle{\frac{\mathcal{N}_{2n,2}}{V(\lambda_2)}}$ for $n\geq 3$ if we look at 9th row. In all these cases, the universal Lie algebra of radical $\mathfrak{h}_n$ is built from the symplectic $\mathfrak{sp}_{2n}(k)$, a simple Lie algebra of Cartan type $C_n$, and its irreducible module $V(\lambda_1)$ as main ingredients.

In the particular case of $\mathfrak{sl}_2(k)$-Levi extensions, we work with irreducible modules are $V(n\lambda_1)=V(n)$ with standard basis $\{a_0,\dots,a_n\}$ as given in \cite[Section 7.2]{Hu72}. Using the basis $h,e,f$ of $\mathfrak{sl}_2(k)$ described in Proposition \ref{filiformNO}, the representation $V(n)$ is given explicitly by the formulas:\[
\begin{array}{ll}\label{slstandar}
 h\cdot v_i=(n-2i)a_i &   for\  0\leq i\leq n \\
 x\cdot a_0=0\ and  \ x\cdot a_i=(n-(i-1))a_{i-1} & for\  0< i \leq n\\
  y\cdot a_n=0\ and \  y\cdot a_i=(i+1)a_{i+1} & for\  0\leq i< n
\end{array}
\]A general construction of $2$-nilpotent Lie algebras that admit $\mathfrak{sl}_2(k)$ as a faithful Levi extension can be found in \cite{Be94}. This work along with the results in this paper yields to the computational approach to $\mathfrak{sl}_2(k)$-Levi extensions in \cite{BeCo12} where several Sage implementations are displayed to compute $t$-nilpotent Lie algebras for $t\geq 3$. \hfill $\square$
\end{rmk}

A nilpotent Lie algebra obtained as a quotient of a free nilpotent by an homogeneous ideal is called \emph{quasi-cyclic nilpotent Lie algebra} (see \cite{Sa70} and references therein for a basic definition and alternative characterizations). Free nilpotent and metabelian Lie algebras are quasi-cyclic; this is not the case for nilindex $\geq 3$. For quasi-cyclic $3-$nilpotent Lie algebras, Theorem \ref{alternativewaylow} can be stablished as follows:

\begin{cor} Let $S$ be a semisimple Lie subalgebra of $\mathfrak{sl}(\mathfrak{m})$ where $\mathfrak{m}$ is a vector space of dimension $d$ regarded as the natural module for $S$ corresponding to the representation $\rho=id$. The $3$-nilpotent quasi-cyclic algebras of type $dim\, \mathfrak{m}$ that admit $S$ as faithful Levi extension are of the form $\displaystyle{\frac{N(\mathfrak{m},3)}{P\oplus \{[P,\mathfrak{m}]+Q\}}}$ where $P$ and $Q$ are $S$-modules of $\bigwedge^2\mathfrak{m}$ and $\mathfrak{s}$ respectively such that $\mathfrak{s}\not\subseteq [P,\mathfrak{m}]+Q$.
\end{cor}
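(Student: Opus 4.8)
The plan is to specialize Theorem~\ref{alternativewaylow}(b) to those quotients of $\mathcal{N}(\mathfrak{m},3)$ whose defining ideal is homogeneous, since by definition that is exactly the quasi-cyclic requirement. By Theorem~\ref{alternativewaylow}, the $3$-nilpotent algebras of type $\dim\mathfrak{m}$ admitting $S$ as a faithful Levi extension are the quotients $\mathcal{N}(\mathfrak{m},3)/I$ with $I$ an $S$-ideal satisfying $\mathfrak{s}\not\subseteq I\subseteq\mathcal{N}(\mathfrak{m},3)^2=\bigwedge^2\mathfrak{m}\oplus\mathfrak{s}$; among these, the quasi-cyclic ones are precisely those for which $I$ is homogeneous. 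So it suffices to describe the homogeneous $S$-ideals $I$ with $\mathfrak{s}\not\subseteq I\subseteq\bigwedge^2\mathfrak{m}\oplus\mathfrak{s}$.

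First I would observe that the $S$-action on $\mathcal{N}(\mathfrak{m},3)$ preserves the grading: the derivations $\widehat{\delta}$ extending $\rho=id$ through (\ref{extension}) are homogeneous of degree $0$, so $\mathfrak{m}$, $\bigwedge^2\mathfrak{m}$ and $\mathfrak{s}$ are $S$-submodules. Hence a homogeneous $S$-submodule $I\subseteq\bigwedge^2\mathfrak{m}\oplus\mathfrak{s}$ splits as $I=P\oplus R$ with $P=I\cap\bigwedge^2\mathfrak{m}$ and $R=I\cap\mathfrak{s}$ again $S$-submodules. Next I would impose the ideal condition: writing $[I,\mathcal{N}(\mathfrak{m},3)]=[P,\mathfrak{m}]+[P,\bigwedge^2\mathfrak{m}]+[P,\mathfrak{s}]+[R,\mathcal{N}(\mathfrak{m},3)]$, all summands except $[P,\mathfrak{m}]\subseteq\mathfrak{s}$ lie in terms of the lower central series beyond the third and hence vanish, so $I$ is an ideal if and only if $[P,\mathfrak{m}]\subseteq R$. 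Taking $Q:=R$, which is an $S$-submodule of $\mathfrak{s}$ containing the $S$-submodule $[P,\mathfrak{m}]$, we obtain $I=P\oplus([P,\mathfrak{m}]+Q)$; conversely, for any $S$-submodules $P\subseteq\bigwedge^2\mathfrak{m}$ and $Q\subseteq\mathfrak{s}$ the homogeneous subspace $P\oplus([P,\mathfrak{m}]+Q)$ is an $S$-ideal, since $[P\oplus([P,\mathfrak{m}]+Q),\mathcal{N}(\mathfrak{m},3)]=[P,\mathfrak{m}]\subseteq[P,\mathfrak{m}]+Q$, exactly as in the proof of Theorem~\ref{alternativewaylow}.

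Finally I would translate the remaining requirements. As $I\subseteq\mathcal{N}(\mathfrak{m},3)^2$, the quotient has type $\dim\mathfrak{m}$, and it has nilindex exactly $3$ iff $\mathfrak{s}\not\subseteq I$, i.e.\ iff $\mathfrak{s}\not\subseteq[P,\mathfrak{m}]+Q$; moreover $\mathfrak{m}$ is faithful and $I\cap\mathfrak{m}=0$, so by Theorem~\ref{alternativewayII} the induced representation on $\mathcal{N}(\mathfrak{m},3)/I$ is a faithful Levi extension by $S$. Collecting these facts gives the stated parametrization. The argument is a short unwinding of Theorems~\ref{alternativewayII} and~\ref{alternativewaylow}; the only point requiring a moment's care is the compatibility of homogeneity with the $S$-module structure, i.e.\ that a homogeneous $S$-stable ideal splits into $S$-submodules degree by degree, and I do not foresee any genuine obstacle beyond that.
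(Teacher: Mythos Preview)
Your proof is correct and follows essentially the same approach as the paper: both decompose the homogeneous $S$-ideal into its degree-$2$ piece $P\subseteq\bigwedge^2\mathfrak{m}$ and a degree-$3$ piece of the form $[P,\mathfrak{m}]+Q\subseteq\mathfrak{s}$. The only organizational difference is that the paper routes through the description $I=J+[J,\mathfrak{m}]$ from Theorem~\ref{alternativewaylow}(b) and extracts $P=(J+[J,\mathfrak{m}])\cap\bigwedge^2\mathfrak{m}$, $Q=J\cap\mathfrak{s}$ from the generating module $J$, whereas you bypass $J$ and take $P=I\cap\bigwedge^2\mathfrak{m}$, $Q=I\cap\mathfrak{s}$ directly; since in your convention $[P,\mathfrak{m}]\subseteq Q$, both choices give the same $[P,\mathfrak{m}]+Q$ and hence the same parametrization.
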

\begin{proof}Any $3$-nilpotent quasi-cyclic algebra that admits $S$ as Levi extension is of the form $N(4,\mathfrak{m})/I$ where $I$ is an homogeneous $S$-ideal. Then, $I=J+[J,\mathfrak{m}]=(J+[J,\mathfrak{m}])\cap \bigwedge^2\mathfrak{m}\oplus (J\cap\mathfrak{s})+[J,\mathfrak{m}]$. 
The $S$-submodule $P=(J+[J,\mathfrak{m}])\cap \bigwedge^2\mathfrak{m}$ satisfies $[J,\mathfrak{m}]=[P,\mathfrak{m}]$ and taking $Q=J\cap\mathfrak{s}$ the result follows.
\end{proof} 

In $3-$nilindex and for non-quasi-cyclic algebras, the description given in Theorem \ref{alternativewaylow} is not as clear as in the quasi-cyclic case. Next Proposition provides some necessary conditions on the existence of non-quasi-cyclic Lie algebras admitting Levi extension(s).

\begin{prop}\label{cone} Let $\mathfrak{n}$ be a $3$-nilpotent non-quasi-cyclic Lie algebra generated by $\mathfrak{m}$ and $S$ a Levi extension. Assume $\mathfrak{n}=N(\mathfrak{m},3)/I$ for some non-homogeneous $S$-ideal inside $N(\mathfrak{m},3)^2$. Then, there exists a $S$-irreducible submodule $B$ of $I$ which has nonzero and isomorphic projections on $\bigwedge^2\mathfrak{m}$ and $\mathfrak{s}$. In particular, $S\neq \mathfrak{sl}(\mathfrak{m})$ and $N(\mathfrak{m},3)^3$ is not an $S$-irreducible module.
\end{prop}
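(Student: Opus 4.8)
The plan is to argue entirely at the level of $S$-modules, using Weyl's complete reducibility theorem together with the grading $\mathcal{N}(\mathfrak{m},3)^{2}=\bigwedge^{2}\mathfrak{m}\oplus\mathfrak{s}$, in which $\bigwedge^{2}\mathfrak{m}$ is the homogeneous component of degree two and $\mathfrak{s}=\mathcal{N}(\mathfrak{m},3)^{3}$ the one of degree three. Let $\pi_{1}\colon\mathcal{N}(\mathfrak{m},3)^{2}\to\bigwedge^{2}\mathfrak{m}$ and $\pi_{2}\colon\mathcal{N}(\mathfrak{m},3)^{2}\to\mathfrak{s}$ denote the two projections; since that decomposition is $S$-invariant, both are $S$-module homomorphisms. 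First I would decompose the $S$-submodule $I\subseteq\mathcal{N}(\mathfrak{m},3)^{2}$ into irreducibles, $I=B_{1}\oplus\cdots\oplus B_{r}$, and observe that at least one summand $B:=B_{i}$ must satisfy $\pi_{1}(B)\neq 0$ \emph{and} $\pi_{2}(B)\neq 0$. Indeed, if every $B_{i}$ had $\pi_{1}(B_{i})=0$ or $\pi_{2}(B_{i})=0$, then each $B_{i}$ would lie wholly inside $\mathfrak{s}=\ker\pi_{1}$ or wholly inside $\bigwedge^{2}\mathfrak{m}=\ker\pi_{2}$; collecting the summands of the two kinds would give $I=(I\cap\bigwedge^{2}\mathfrak{m})\oplus(I\cap\mathfrak{s})$, i.e.\ $I$ would be homogeneous, contrary to hypothesis. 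Fix such a $B$.

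The next step is Schur's lemma. Since $B$ is irreducible and $\pi_{1}|_{B}\neq 0$, the $S$-submodule $\ker(\pi_{1}|_{B})$ of $B$ is proper, hence zero, so $\pi_{1}|_{B}$ is an $S$-module isomorphism of $B$ onto $\pi_{1}(B)\subseteq\bigwedge^{2}\mathfrak{m}$; likewise $\pi_{2}|_{B}$ is an $S$-isomorphism of $B$ onto $\pi_{2}(B)\subseteq\mathfrak{s}$. Hence $\pi_{1}(B)\cong B\cong\pi_{2}(B)$ as $S$-modules, both nonzero, which is exactly the asserted property of $B$ (and, incidentally, $B$ is the graph of the $S$-isomorphism $\pi_{2}|_{B}\circ(\pi_{1}|_{B})^{-1}\colon\pi_{1}(B)\to\pi_{2}(B)$).

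For the two particular consequences I would compare with the $\mathfrak{sl}(\mathfrak{m})$-structure and with dimensions. If $S=\mathfrak{sl}(\mathfrak{m})$, both $\bigwedge^{2}\mathfrak{m}$ and $\mathfrak{s}$ are irreducible $\mathfrak{sl}(\mathfrak{m})$-modules (of highest weights $\lambda_{2}$ and $\lambda_{1}+\lambda_{2}$, degenerating to $V(0)$ and $V(\lambda_{1})$ when $\dim\mathfrak{m}=2$) and these two are non-isomorphic; consequently no irreducible submodule of $\bigwedge^{2}\mathfrak{m}\oplus\mathfrak{s}$ can have both projections nonzero, which contradicts the existence of $B$. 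Hence $S\neq\mathfrak{sl}(\mathfrak{m})$. Similarly, if $\mathcal{N}(\mathfrak{m},3)^{3}=\mathfrak{s}$ were $S$-irreducible, then the nonzero $S$-submodule $\pi_{2}(B)$ would have to be all of $\mathfrak{s}$, forcing $\mathfrak{s}\cong B\cong\pi_{1}(B)$ to be a submodule of $\bigwedge^{2}\mathfrak{m}$ and hence $\dim\mathfrak{s}\le\dim\bigwedge^{2}\mathfrak{m}$; but the dimension formulas recalled at the beginning of this section give $\dim\mathfrak{s}>\dim\bigwedge^{2}\mathfrak{m}$ for every $\dim\mathfrak{m}\ge 2$, a contradiction. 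So $\mathcal{N}(\mathfrak{m},3)^{3}$ is not $S$-irreducible.

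I do not anticipate a real obstacle: the argument is just complete reducibility plus Schur's lemma, and the only point meriting a moment's care is the smallest case $\dim\mathfrak{m}=2$, where $\bigwedge^{3}\mathfrak{m}=0$, $\bigwedge^{2}\mathfrak{m}\cong V(0)$ and $\mathfrak{s}\cong V(\lambda_{1})$, so that the required non-isomorphism and the inequality $\dim\mathfrak{s}>\dim\bigwedge^{2}\mathfrak{m}$ (here $2>1$) are seen by inspection.
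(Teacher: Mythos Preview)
Your argument for the existence of $B$ is essentially the paper's own: decompose $I$ by complete reducibility, pick an irreducible summand that is not contained in either homogeneous piece, and use Schur's lemma to see that both projections are isomorphisms onto their images. The paper phrases this as ``there is an irreducible $S$-submodule $B$ of $I$ with $B\cap\bigwedge^{2}\mathfrak{m}=0$ and $B\cap\mathfrak{s}=0$'', which is the same thing.

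The difference lies in the ``in particular'' clause. The paper does not argue this directly: it simply invokes \cite[Proposition~8]{Sa70}. Your treatment is more self-contained. For $S\neq\mathfrak{sl}(\mathfrak{m})$ you use that $\bigwedge^{2}\mathfrak{m}$ and $\mathfrak{s}$ are non-isomorphic irreducible $\mathfrak{sl}(\mathfrak{m})$-modules (highest weights $\lambda_{2}$ and $\lambda_{1}+\lambda_{2}$, respectively $V(0)$ and $V(\lambda_{1})$ when $d=2$), and for the non-irreducibility of $\mathfrak{s}$ you use the dimension inequality $\dim\mathfrak{s}=2\binom{d+1}{3}>\binom{d}{2}=\dim\bigwedge^{2}\mathfrak{m}$ for all $d\ge 2$. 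Both are valid and make the proof independent of the external reference, at the small cost of invoking the explicit highest-weight description of $\mathfrak{s}$ (which the paper has already recorded in Table~\ref{ltsadjuntos-so-sl-excep}).
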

\begin{proof}Write $N(\mathfrak{m},3)^2=\bigwedge^2\mathfrak{m}\oplus \mathfrak{s}$. Since $I$ is non-homogeneous, by complete reducibility there is an irreducible $S$-submodule $B$ of $I$ such that $B\cap \bigwedge^2\mathfrak{m}=0$ and $B\cap \mathfrak{s}=0$. So the projection maps from $B$ onto $\bigwedge^2\mathfrak{m}$ and $\mathfrak{s}$ are monomorphisms, therefore both $\bigwedge^2\mathfrak{m}$ and $\mathfrak{s}$ contain an isomorphic copy of $B$. The last assertion follows from \cite[Proposition 8]{Sa70}.
\end{proof}
From Table \ref{ltsadjuntos-so-sl-excep} and Proposition \ref{cone} we have that for $V(\lambda_1)$ and a classical simple Lie algebra $S$ (Cartan types $A,B,C,D$), the quotients $N(V(\lambda_1))/I$ provides only quasi-cyclic Lie algebras. In \cite[Section 4]{Sa70} a $6$-nilpotent non-quasi-cyclic Lie algebra $\mathcal{L}$ of type $4$ and dimension $38$ that admit $\mathfrak{sl}_2(k)$ as Levi extension is given. This algebra is a quotient of $\mathcal{N}_{4,6}$ with module structure inherit from $V(1)\oplus V(1)$. Using Sage, we have found a $3$-nilpotent non-quasi-cyclic algebra as a quotient of the free nilpotent $\mathcal{N}(\mathfrak{m},3)$ where $\mathfrak{m}$ is regarded as the $\mathfrak{sl}_2(k)$-irreducible $V(10)$.

\begin{examp} Let $\{v_0,\dots,v_{10},w_0,\dots,w_{18}, z_0,\dots,z_{6}, x_0,\dots,x_{14}\}$ be a basis of the nilpotent Lie algebra $\mathcal{N}_{10,18,6,4}$ with multiplication given in Table \ref{nocuasiciclica}. As nilpotent algebra  $\mathcal{N}_{10,18,6,4}$ is  generated by the minimal set $\{v_0,\dots,v_{10}\}$. This algebra admits a $\mathfrak{sl}_2(k)$-Levi extension by  considering $\{v_0,\dots,v_{10}\}$, $\{w_0,\dots,w_{18}\}$, $\{z_0,\dots,z_{6}\}$ and $\{x_0,\dots,x_{14}\}$ as standard basis (see Remark \ref{nota}) of the $\mathfrak{sl}_2(k)$-irreducible modules $V(10)$, $V(18)$, $V(6)$ and $V(14)$ respectively.
\end{examp}

\begin{table}[htb]
\caption{Module structure of free nilpotent of nilindex $2$ and $3$ admitting Levi extension $\mathcal{L}$}\label{ltsadjuntos-so-sl-excep}
\medskip
\hskip -0.5cm
\begin{minipage}{\linewidth}
\begin{tabular}{lllll}
\hline

& & & &\\[-2ex]
{\scriptsize $\mathcal{L}$} & {\scriptsize $\mathfrak{m}$} & {\scriptsize  $\bigwedge^2\mathfrak{m}$} &
{\scriptsize  $\bigwedge^3\mathfrak{m}$} & {\scriptsize  $\displaystyle \frac{\mathfrak{m}\otimes \bigwedge^2\mathfrak{m}}{\bigwedge^3\mathfrak{m}}$}\\
& & & &\\[-2ex]
\hline
&&&&\\[-2ex]

 {\scriptsize  $A_1$} & {\scriptsize  $V(\lambda_1)$} & {\scriptsize  $k$} &
{\scriptsize $0$} & {\scriptsize $V(\lambda_1)$}\\
  & & & &\\[-2ex]
  
 {\scriptsize  $A_2$} & {\scriptsize  $V(\lambda_1)$} & {\scriptsize  $V(\lambda_2)$} &
{\scriptsize $k$} & {\scriptsize $V(\lambda_1\tiny{+}\lambda_2) $}\\

& {\scriptsize  $V(\lambda_2)$} & {\scriptsize  $V(\lambda_1)$} &
{\scriptsize $k$} & {\scriptsize $V(\lambda_1\tiny{+}\lambda_2) $}\\
  & & & &\\[-2ex]

{\scriptsize  $A_n, (n\geq 3)$}

& {\scriptsize  $V(\lambda_1)$} & {\scriptsize  $V(\lambda_2)$} & {\scriptsize  $V(\lambda_3)$}
  & {\scriptsize $V(\lambda_1\tiny{+}\lambda_2)$}\\
  & & & &\\[-2ex]
& & & &\\[-2ex]

 {\scriptsize  $B_3$} & {\scriptsize  $V(\lambda_1)$} & {\scriptsize  $V(\lambda_2)$} &
{\scriptsize $V(2\lambda_3)$} & {\scriptsize $V(\lambda_1\tiny{+}\lambda_2)\oplus V(\lambda_1)$}\\
  & & & &\\[-2ex]

{\scriptsize  $B_n,(n\geq 4)$} & {\scriptsize  $V(\lambda_1)$} & {\scriptsize  $V(\lambda_2)$} &{\scriptsize $V(\lambda_3)$}& {\scriptsize $V(\lambda_1\tiny{+}\lambda_2)\oplus V(\lambda_1)$}\\
& & & &\\[-2ex]
& & & &\\[-2ex]

 {\scriptsize  $C_2$} & {\scriptsize  $V(\lambda_1)$} & {\scriptsize  $V(\lambda_2)\oplus k$} &
{\scriptsize $V(\lambda_1)$} & {\scriptsize $V(\lambda_1\tiny{+}\lambda_2)\oplus V(\lambda_1)$}\\

& {\scriptsize  $V(\lambda_2)$} & {\scriptsize  $V(2\lambda_1)$} &
{\scriptsize $V(2\lambda_1)$} & {\scriptsize $V(2\lambda_1\tiny{+}\lambda_2)\oplus V(\lambda_2)$}\\
  & & & &\\[-2ex]

{\scriptsize  $C_n,(n\ge 3)$} & {\scriptsize  $V(\lambda_1)$} & {\scriptsize  $V(\lambda_2)\oplus k$} &
{\scriptsize $V(\lambda_1)\oplus V(\lambda_3)$} & {\scriptsize $ V(\lambda_1\tiny{+}\lambda_2)\oplus V(\lambda_1)$}\\
& & & &\\[-2ex]
& & & &\\[-2ex]

 {\scriptsize  $D_4$} & {\scriptsize  $V(\lambda_1)$} & {\scriptsize  $V(\lambda_2)$} &
{\scriptsize $V(\lambda_3\tiny{+}\lambda_4)$} & {\scriptsize $V(\lambda_1\tiny{+}\lambda_2)\oplus V(\lambda_1)$}\\
  & & & &\\[-2ex]

{\scriptsize  $D_n, (n\geq 5)$} & {\scriptsize  $V(\lambda_1)$} & {\scriptsize  $V(\lambda_2)$} &
{\scriptsize $V(\lambda_3)$} & {\scriptsize $V(\lambda_1\tiny{+}\lambda_2)\oplus V(\lambda_1)$}\\
& & & &\\[-2ex]
& & & &\\[-2ex]

{\scriptsize  $G_2$} & {\scriptsize  $V(\lambda_1)$} & {\scriptsize  $V(\lambda_1)\oplus V(\lambda_2) $} &
{\scriptsize  $V(2\lambda_1)\oplus V(\lambda_1)\oplus k$} &
{\scriptsize  $V(\lambda_1\tiny{+}\lambda_2)\oplus V(2\lambda_1)\oplus$}\\

&  & &
 &
{\scriptsize  $\quad V(\lambda_1)\oplus V(\lambda_2)$}\\

 & {\scriptsize  $V(\lambda_2)$} & {\scriptsize  $V(3\lambda_1)\oplus V(\lambda_2) $} &
{\scriptsize  $V(4\lambda_1)\oplus V(3\lambda_1)\oplus V(2\lambda_1)$} &
{\scriptsize  $V(3\lambda_1\tiny{+}\lambda_2)\oplus V(2\lambda_1\tiny{+}\lambda_2)\oplus$}\\

&  & &
{\scriptsize  $\quad V(2\lambda_2)\oplus V(\lambda_2)\oplus k$} &{\scriptsize  $\quad V(\lambda_1\tiny{+}\lambda_2)\oplus V(2\lambda_2)\oplus V(\lambda_2)\oplus$}
\\

&  & &
 &{\scriptsize  $\quad V(3\lambda_1)\oplus V(2\lambda_1)$}
\\
& & & &\\[-2ex]
& & & &\\[-2ex]

 {\scriptsize  $F_4$} & {\scriptsize  $V(\lambda_1)$}& {\scriptsize  $V(\lambda_1)\oplus V(\lambda_2)$} &
{\scriptsize  $V(2\lambda_1)\oplus V(\lambda_2)$} &
{\scriptsize  $V(2\lambda_1)\oplus V(\lambda_1)\oplus V(\lambda_2)\oplus$}\\

&  &  &
{\scriptsize  $\quad V(2\lambda_3)\oplus V(2\lambda_4)\oplus k$} &
{\scriptsize  $ \quad V(2\lambda_4)\oplus V(\lambda_1\tiny{+} \lambda_2)\oplus$}\\

&  &  &
 &
{\scriptsize  $\quad V(\lambda_1\tiny{+}2\lambda_4)\oplus V(\lambda_3\tiny{+} \lambda_4)$}\\
& & & &\\[-2ex]
& & & &\\[-2ex]

{\scriptsize  $E_6$} & {\scriptsize  $V(\lambda_1)$} & {\scriptsize  $V(\lambda_3)$} &
{\scriptsize  $V(\lambda_4)$} &
{\scriptsize  $V(\lambda_1\tiny{+}\lambda_3)\oplus V(\lambda_1\tiny{+}\lambda_6)\oplus $} \\

&  &  &
 &
{\scriptsize  $\quad V(\lambda_2)$}\\
& & & &\\[-2ex]
& & & &\\[-2ex]

{\scriptsize  $E_7$} & {\scriptsize  $V(\lambda_1)$} & {\scriptsize  $V(\lambda_1)\oplus V(\lambda_3)$} &
{\scriptsize  $V(2\lambda_1)\oplus V(\lambda_3)\oplus$} &
{\scriptsize  $V(\lambda_1\tiny{+}\lambda_3)\oplus V(\lambda_1\tiny{+}\lambda_6)$} \\

& &  &
{\scriptsize  $\quad V(\lambda_4)\oplus V(\lambda_6)\oplus k$} &
{\scriptsize  $\quad V(\lambda_2\tiny{+}\lambda_7)\oplus V(2\lambda_1)\oplus$} \\

&  &  &
 &
{\scriptsize  $\quad 2V(\lambda_1)\oplus V(\lambda_3)\oplus V(\lambda_6)$}\\
& & & &\\[-2ex]
& & & &\\[-2ex]

{\scriptsize  $E_8$} & {\scriptsize  $ V(\lambda_1)$} & {\scriptsize  $V(\lambda_1\tiny{+}\lambda_8)\oplus V(\lambda_3)\oplus$} &
{\scriptsize  $V(2\lambda_1\tiny{+}\lambda_8)\oplus V(\lambda_3\tiny{+}\lambda_8)\oplus $} &$\qquad\star$
\footnote{$2V(\lambda_5)\oplus 2V(\lambda_1+2\lambda_8)\oplus 2V(2\lambda_1)\oplus 3V(\lambda_7+\lambda_8)\oplus 3V(\lambda_6)\oplus 4V(\lambda_2)\oplus 2V(2\lambda_8)\oplus V(\lambda_1+\lambda_3)\oplus V(\lambda_1+\lambda_6)\oplus V(\lambda_2+\lambda_7)\oplus V(2\lambda_1+\lambda_8)\oplus V(\lambda_3+\lambda_8)\oplus 2V(\lambda_1+\lambda_2)\oplus V(\lambda_6+\lambda_8)\oplus 3V(\lambda_1+\lambda_7)\oplus 3V(\lambda_2+\lambda_8)\oplus 3V(\lambda_3)\oplus 5V(\lambda_1+\lambda_8)\oplus 3V(\lambda_1)\oplus3V(\lambda_7)\oplus  2V(\lambda_8).$}

\\

&  & {\scriptsize  $\quad V(\lambda_7)\oplus V(\lambda_8)$} &
 {\scriptsize  $\quad V(\lambda_1\tiny{+}\lambda_2)\oplus V(\lambda_6\tiny{+}\lambda_8)\oplus$}&
\\

&  &  &
 {\scriptsize  $\quad 2V(\lambda_1\tiny{+}\lambda_7)\oplus 2V(\lambda_2\tiny{+}\lambda_8)\oplus$}&
\\

&  &  &
 {\scriptsize  $\quad 2V(\lambda_7\tiny{+}\lambda_8)\oplus 3V(\lambda_1\tiny{+}\lambda_8)\oplus$}&
\\

&  &  &
 {\scriptsize  $\quad 2V(\lambda_1)\oplus V(\lambda_2)\oplus 2V(\lambda_3)\oplus$}&
\\

&  &  &
 {\scriptsize  $\quad V(\lambda_4)\oplus V(\lambda_6)\oplus 3V(\lambda_7)\oplus$}&
\\

&  &  &
 {\scriptsize  $\quad 3V(\lambda_8)\oplus V(\lambda_8)$}&
\\

& & & &\\[-2ex]
\hline
\end{tabular}
\end{minipage}
\end{table}

{\scriptsize 
\begin{table}[htdp]
\caption{Non quasi-cyclic $3$-nilpotent $\mathcal{N}_{10,18,6,4}$ multiplication}\label{nocuasiciclica}
\begin{center}
\begin{tabular}{ll}
{\footnotesize $[v_0, v_1]= w_0$}&{\footnotesize $[v_3, v_5]=  \frac{42}{221}*w_7 - \frac{54}{143}*x_5 - 7*z_1$}\\
{\footnotesize $[v_0, v_2]= \frac{1}{2}*w_1$}&{\footnotesize $[v_3, v_6]=  \frac{420}{2431}*w_8 - \frac{2}{11}*x_6 + \frac{21}{5}*z_2$}\\
{\footnotesize $[v_0, v_3]= \frac{4}{17}*w_2 + x_0$}&{\footnotesize $[v_3, v_7]=  \frac{288}{2431}*w_9 + \frac{2}{143}*x_7 + \frac{23}{5}*z_3$}\\
{\footnotesize $[v_0, v_4]= \frac{7}{68}*w_3 + \frac{1}{2}*x_1$}&{\footnotesize $[v_3, v_8]=  \frac{150}{2431}*w_{10} + \frac{15}{143}*x_8 + \frac{1}{5}*z_4$}\\
{\footnotesize $[v_0, v_5]= \frac{7}{170}*w_4 + \frac{3}{13}*x_2$}&{\footnotesize $[v_3, v_9]=  \frac{5}{221}*w_{11} + \frac{1}{11}*x_9 - 3*z_5$}\\
{\footnotesize $[v_0, v_6]= \frac{1}{68}*w_5 + \frac{5}{52}*x_3$}&{\footnotesize $[v_3, v_{10}]=  \frac{1}{221}*w_{12} + \frac{5}{143}*x_{10} + z_6$}\\
{\footnotesize $[v_0, v_7]= \frac{1}{221}*w_6 + \frac{5}{143}*x_4 + z_0$}&{\footnotesize $[v_4, v_5]=  \frac{294}{2431}*w_8 - \frac{42}{143}*x_6 - \frac{49}{5}*z_2$}\\
{\footnotesize $[v_0, v_8]= \frac{1}{884}*w_7 + \frac{3}{286}*x_5 + \frac{1}{2}*z_1$}&{\footnotesize $[v_4, v_6]=  \frac{441}{2431}*w_9 - \frac{49}{143}*x_7 - \frac{49}{10}*z_3
$}\\
{\footnotesize $[v_0, v_9]= \frac{1}{4862}*w_8 + \frac{1}{429}*x_6 + \frac{1}{5}*z_2$}&{\footnotesize $[v_4, v_7]=  \frac{420}{2431}*w_{10} - \frac{2}{11}*x_8 + \frac{21}{5}*z_4$}\\
{\footnotesize $[v_0, v_{10}]= \frac{1}{48620}*w_9 + \frac{1}{3432}*x_7 + \frac{1}{20}*z_3$}&{\footnotesize $[v_4, v_8]=  \frac{105}{884}*w_{11} + \frac{9}{286}*x_9 + 7*z_5
$}\\
{\footnotesize $[v_1, v_2]=  \frac{5}{17}*w_2 - 3*x_0$}&{\footnotesize $[v_4, v_9]=  \frac{25}{442}*w_{12} + \frac{20}{143}*x_{10} - 7*z_6$}\\
{\footnotesize $[v_1, v_3]=  \frac{5}{17}*w_3 - x_1$}&{\footnotesize $[v_4, v_{10}]=  \frac{1}{68}*w_{13} + \frac{5}{52}*x_{11}$}\\
{\footnotesize $[v_1, v_4]=  \frac{7}{34}*w_4 - \frac{2}{13}*x_2$}&{\footnotesize $[v_5, v_6]=  \frac{294}{2431}*w_{10} - \frac{42}{143}*x_8 - \frac{49}{5}*z_4$}\\
{\footnotesize $[v_1, v_5]=  \frac{2}{17}*w_5 + \frac{3}{26}*x_3$}&{\footnotesize $[v_5, v_7]=  \frac{42}{221}*w_{11} - \frac{54}{143}*x_9 - 7*z_5$}\\
{\footnotesize $[v_1, v_6]=  \frac{25}{442}*w_6 +\frac{20}{143}*x_4 - 7*z_0$}&{\footnotesize $[v_5, v_8]=  \frac{81}{442}*w_{12} - \frac{27}{143}*x_{10} + 21*z_6$}\\
{\footnotesize $[v_1, v_7]=  \frac{5}{221}*w_7 + \frac{1}{11}*x_5 - 3*z_1$}&{\footnotesize $[v_5, v_9]=  \frac{2}{17}*w_{13} + \frac{3}{26}*x_{11}$}\\
{\footnotesize $[v_1, v_8]=  \frac{35}{4862}*w_8 + \frac{6}{143}*x_6 - \frac{4}{5}*z_2$}&{\footnotesize $[v_5, v_{10}]=  \frac{7}{170}*w_{14} + \frac{3}{13}*x_{12}$}\\
{\footnotesize $[v_1, v_9]=  \frac{4}{2431}*w_9 + \frac{23}{1716}*x_7 + \frac{1}{10}*z_3$}&{\footnotesize $[v_6, v_7]=  \frac{30}{221}*w_{12} - \frac{54}{143}*x_{10} - 35*z_6$}\\
{\footnotesize $[v_1, v_{10}]=  \frac{1}{4862}*w_{10} + \frac{1}{429}*x_8 + \frac{1}{5}*z_4$}&{\footnotesize $[v_6, v_8]=  \frac{15}{68}*w_{13} - \frac{27}{52}*x_{11}$}\\
{\footnotesize $[v_2, v_3]=  \frac{3}{17}*w_4 - \frac{9}{13}*x_2$}&{\footnotesize $[v_6, v_9]=  \frac{7}{34}*w_{14} - \frac{2}{13}*x_{12}$}\\
{\footnotesize $[v_2, v_4]=  \frac{15}{68}*w_5 - \frac{27}{52}*x_3$}&{\footnotesize $[v_6, v_{10}]=  \frac{7}{68}*w_{15} + \frac{1}{2}*x_{13}$}\\
{\footnotesize $[v_2, v_5]=  \frac{81}{442}*w_6 - \frac{27}{143}*x_4 + 21*z_0$}&{\footnotesize $[v_7, v_8]=  \frac{3}{17}*w_{14} - \frac{9}{13}*x_{12}$}\\
{\footnotesize $[v_2, v_6]=  \frac{105}{884}*w_7 + \frac{9}{286}*x_5 + 7*z_1$}&{\footnotesize $[v_7, v_9]=  \frac{5}{17}*w_{15} - x_{13}$}\\
{\footnotesize $[v_2, v_7]=  \frac{150}{2431}*w_8 + \frac{15}{143}*x_6 + \frac{1}{5}*z_2$}&{\footnotesize $[v_7, v_{10}]=  \frac{4}{17}*w_{16} + x_{14}$}\\
{\footnotesize $[v_2, v_8]=  \frac{243}{9724}*w_9 + \frac{9}{104}*x_7 - \frac{33}{20}*z_3$}&{\footnotesize $[v_8, v_9]=  \frac{5}{17}*w_{16} - 3*x_{14}$}\\
{\footnotesize $[v_2, v_9]=  \frac{35}{4862}*w_{10} + \frac{6}{143}*x_8 - \frac{4}{5}*z_4$}&{\footnotesize $[v_8, v_{10}]=  \frac{1}{2}*w_{17}$}\\
{\footnotesize $[v_2, v_{10}]=  \frac{1}{884}*w_{11} + \frac{3}{286}*x_9 + \frac{1}{2}*z_5$}&{\footnotesize $[v_9, v_{10}]=  w_{18}$}\\
{\footnotesize $[v_3, v_4]=  \frac{30}{221}*w_6 - \frac{54}{143}*x_4 - 35*z_0$}&
\end{tabular}
\end{center}
\label{default}
\end{table}}{\scriptsize 
\begin{table}[htdp]
\begin{center}
\begin{tabular}{llll}
{\footnotesize $[v_0, w_7]= x_0$}&{\footnotesize $[v_4, w_6]= \frac{49}{26}*x_3$}&{\footnotesize $[v_8, w_1]= -\frac{12}{91}*x_2$}&{\footnotesize $[v_2, z_6]= -\frac{57}{11492}*x_7$}\\
{\footnotesize $[v_0, w_8]= \frac{11}{14}*x_1$}&{\footnotesize $[v_4, w_7]= -\frac{31}{13}*x_4$}&{\footnotesize $[v_8, w_2]= \frac{45}{364}*x_3$}&{\footnotesize $[v_3, z_0]= \frac{3762}{20111}*x_2$}\\
{\footnotesize $[v_0, w_9]= \frac{55}{91}*x_2$}&{\footnotesize $[v_4, w_8]= -\frac{55}{13}*x_5$}&{\footnotesize $[v_8, w_3]= \frac{3}{7}*x_4$}&{\footnotesize $[v_3, z_1]= \frac{2508}{20111}*x_3$}\\
{\footnotesize $[v_0, w_{10}]= \frac{165}{364}*x_3$}&{\footnotesize $[v_4, w_9]= -\frac{55}{13}*x_6$}&{\footnotesize $[v_8, w_4]= \frac{60}{91}*x_5$}&{\footnotesize $[v_3, z_2]= -\frac{570}{20111}*x_4$}\\
{\footnotesize $[v_0, w_{11}]= \frac{30}{91}*x_4$}&{\footnotesize $[v_4, w_{10}]= -\frac{77}{26}*x_7$}&{\footnotesize $[v_8, w_5]= \frac{9}{13}*x_6$}&{\footnotesize $[v_3, z_3]= -\frac{2280}{20111}*x_5$}\\
{\footnotesize $[v_0, w_{12}]= \frac{3}{13}*x_5$}&{\footnotesize $[v_4, w_{11}]= -x_8$}&{\footnotesize $[v_8, w_6]= \frac{21}{52}*x_7$}&{\footnotesize $[v_3, z_4]= -\frac{2090}{20111}*x_6$}\\
{\footnotesize $[v_0, w_{13}]= \frac{2}{13}*x_6$}&{\footnotesize $[v_4, w_{12}]= \frac{14}{13}*x_9$}&{\footnotesize $[v_8, w_7]= -\frac{30}{91}*x_8$}&{\footnotesize $[v_3, z_5]= -\frac{152}{2873}*x_7$}\\
{\footnotesize $[v_0, w_{14}]= \frac{5}{52}*x_7$}&{\footnotesize $[v_4, w_{13}]= \frac{35}{13}*x_{10}$}&{\footnotesize $[v_8, w_8]= -\frac{297}{182}*x_9$}&{\footnotesize $[v_3, z_6]= -\frac{38}{2873}*x_8$}\\
{\footnotesize $[v_0, w_{15}]= \frac{5}{91}*x_8$}&{\footnotesize $[v_4, w_{14}]= \frac{85}{26}*x_{11}$}&{\footnotesize $[v_8, w_9]= -\frac{330}{91}*x_{10}$}&{\footnotesize $[v_4, z_0]= \frac{627}{5746}*x_3$}\\
{\footnotesize $[v_0, w_{16}]= \frac{5}{182}*x_9$}&{\footnotesize $[v_4, w_{15}]= \frac{29}{13}*x_{12}$}&{\footnotesize $[v_8, w_{10}]= -\frac{2343}{364}*x_{11}$}&{\footnotesize $[v_4, z_1]= \frac{399}{2873}*x_4$}\\
{\footnotesize $[v_0, w_{17}]= \frac{1}{91}*x_{10}$}&{\footnotesize $[v_4, w_{16}]= -x_{13}$}&{\footnotesize $[v_8, w_{11}]= -\frac{927}{91}*x_{12}$}&{\footnotesize $[v_4, z_2]= \frac{285}{5746}*x_5$}\\
{\footnotesize $[v_0, w_{18}]= \frac{1}{364}*x_{11}$}&{\footnotesize $[v_4, w_{17}]= -7*x_{14}$}&{\footnotesize $[v_8, w_{12}]= -15*x_{13}$}&{\footnotesize $[v_4, z_3]= -\frac{190}{2873}*x_6$}\\
{\footnotesize $[v_1, w_6]= -7*x_0$}&{\footnotesize $[v_5, w_2]= -21*x_0$}&{\footnotesize $[v_8, w_{13}]= -21*x_{14}$}&{\footnotesize $[v_4, z_4]= -\frac{665}{5746}*x_7$}\\
\end{tabular}
\end{center}
\end{table}%
}{\scriptsize 
\begin{table}[htdp]
\begin{center}
\begin{tabular}{llll}
{\footnotesize $[v_1, w_7]= -\frac{37}{7}*x_1$}&{\footnotesize $[v_5, w_3]= -9*x_1$}&{\footnotesize $[v_9, w_0]= -\frac{3}{91}*x_2$}&{\footnotesize $[v_4, z_5]= -\frac{19}{221}*x_8$}\\
{\footnotesize $[v_1, w_8]= -\frac{352}{91}*x_2$}&{\footnotesize $[v_5, w_4]= -\frac{15}{13}*x_2$}&{\footnotesize $[v_9, w_1]= -\frac{1}{14}*x_3$}&{\footnotesize $[v_4, z_6]= -\frac{171}{5746}*x_9$}\\
{\footnotesize $[v_1, w_9]= -\frac{495}{182}*x_3$}&{\footnotesize $[v_5, w_5]= \frac{42}{13}*x_3$}&{\footnotesize $[v_9, w_2]= -\frac{8}{91}*x_4$}&{\footnotesize $[v_5, z_0]= \frac{171}{2873}*x_4$}\\
{\footnotesize $[v_1, w_{10}]= -\frac{165}{91}*x_4$}&{\footnotesize $[v_5, w_6]= \frac{63}{13}*x_4$}&{\footnotesize $[v_9, w_3]= -\frac{5}{91}*x_5$}&{\footnotesize $[v_5, z_1]= \frac{342}{2873}*x_5$}\\
{\footnotesize $[v_1, w_{11}]= -\frac{102}{91}*x_5$}&{\footnotesize $[v_5, w_7]= \frac{57}{13}*x_5$}&{\footnotesize $[v_9, w_4]= \frac{5}{91}*x_6$}&{\footnotesize $[v_5, z_2]= \frac{285}{2873}*x_6$}\\
{\footnotesize $[v_1, w_{12}]= -\frac{8}{13}*x_6$}&{\footnotesize $[v_5, w_8]= \frac{33}{13}*x_6$}&{\footnotesize $[v_9, w_5]= \frac{7}{26}*x_7$}&{\footnotesize $[v_5, z_4]= -\frac{285}{2873}*x_8$}\\
{\footnotesize $[v_1, w_{13}]= -\frac{7}{26}*x_7$}&{\footnotesize $[v_5, w_{10}]= -\frac{3}{13}*x_8$}&{\footnotesize $[v_9, w_6]= \frac{8}{13}*x_8$}&{\footnotesize $[v_5, z_5]= -\frac{342}{2873}*x_9$}\\
{\footnotesize $[v_1, w_{14}]= -\frac{5}{91}*x_8$}&{\footnotesize $[v_5, w_{11}]= -\frac{57}{13}*x_9$}&{\footnotesize $[v_9, w_7]= \frac{102}{91}*x_9$}&{\footnotesize $[v_5, z_6]= -\frac{171}{2873}*x_{10}$}\\
{\footnotesize $[v_1, w_{15}]= \frac{5}{91}*x_9$}&{\footnotesize $[v_5, w_{12}]= -\frac{63}{13}*x_{10}$}&{\footnotesize $[v_9, w_8]= \frac{165}{91}*x_{10}$}&{\footnotesize $[v_6, z_0]= \frac{171}{5746}*x_5$}\\
{\footnotesize $[v_1, w_{16}]= \frac{8}{91}*x_{10}$}&{\footnotesize $[v_5, w_{13}]= -\frac{42}{13}*x_{11}$}&{\footnotesize $[v_9, w_9]= \frac{495}{182}*x_{11}$}&{\footnotesize $[v_6, z_1]= \frac{19}{221}*x_6$}\\
{\footnotesize $[v_1, w_{17}]= \frac{1}{14}*x_{11}$}&{\footnotesize $[v_5, w_{14}]= \frac{15}{13}*x_{12}$}&{\footnotesize $[v_9, w_{10}]= \frac{352}{91}*x_{12}$}&{\footnotesize $[v_6, z_2]= \frac{665}{5746}*x_7$}\\
{\footnotesize $[v_1, w_{18}]= \frac{3}{91}*x_{12}$}&{\footnotesize $[v_5, w_{15}]= 9*x_{13}$}&{\footnotesize $[v_9, w_{11}]= \frac{37}{7}*x_{13}$}&{\footnotesize $[v_6, z_3]= \frac{190}{2873}*x_8$}\\
{\footnotesize $[v_2, w_5]= 21*x_0$}&{\footnotesize $[v_5, w_{16}]= 21*x_{14}$}&{\footnotesize $[v_9, w_{12}]= 7*x_{14}$}&{\footnotesize $[v_6, z_4]= -\frac{285}{5746}*x_9$}\\
{\footnotesize $[v_2, w_6]= 15*x_1$}&{\footnotesize $[v_6, w_1]= 7*x_0$}&{\footnotesize $[v_{10}, w_0]= -\frac{1}{364}*x_3$}&{\footnotesize $[v_6, z_5]= -\frac{399}{2873}*x_{10}$}\\
{\footnotesize $[v_2, w_7]= \frac{927}{91}*x_2$}&{\footnotesize $[v_6, w_2]= x_1$}&{\footnotesize $[v_{10}, w_1]= -\frac{1}{91}*x_4$}&{\footnotesize $[v_6, z_6]= -\frac{627}{5746}*x_{11}$}\\
{\footnotesize $[v_2, w_8]= \frac{2343}{364}*x_3$}&{\footnotesize $[v_6, w_3]= -\frac{29}{13}*x_2$}&{\footnotesize $[v_{10}, w_2]= -\frac{5}{182}*x_5$}&{\footnotesize $[v_7, z_0]= \frac{38}{2873}*x_6$}\\
{\footnotesize $[v_2, w_9]= \frac{330}{91}*x_4$}&{\footnotesize $[v_6, w_4]= -\frac{85}{26}*x_3$}&{\footnotesize $[v_{10}, w_3]= -\frac{5}{91}*x_6$}&{\footnotesize $[v_7, z_1]= \frac{152}{2873}*x_7$}\\
{\footnotesize $[v_2, w_{10}]= \frac{297}{182}*x_5$}&{\footnotesize $[v_6, w_5]= -\frac{35}{13}*x_4$}&{\footnotesize $[v_{10}, w_4]= -\frac{5}{52}*x_7$}&{\footnotesize $[v_7, z_2]= \frac{2090}{20111}*x_8$}\\
{\footnotesize $[v_2, w_{11}]= \frac{30}{91}*x_6$}&{\footnotesize $[v_6, w_6]= -\frac{14}{13}*x_5$}&{\footnotesize $[v_{10}, w_5]= -\frac{2}{13}*x_8$}&{\footnotesize $[v_7, z_3]= \frac{2280}{20111}*x_9$}\\
{\footnotesize $[v_2, w_{12}]= -\frac{21}{52}*x_7$}&{\footnotesize $[v_6, w_7]= x_6$}&{\footnotesize $[v_{10}, w_6]= -\frac{3}{13}*x_9$}&{\footnotesize $[v_7, z_4]= \frac{570}{20111}*x_{10}$}\\
{\footnotesize $[v_2, w_{13}]= -\frac{9}{13}*x_8$}&{\footnotesize $[v_6, w_8]= \frac{77}{26}*x_7$}&{\footnotesize $[v_{10}, w_7]= -\frac{30}{91}*x_{10}$}&{\footnotesize $[v_7, z_5]= -\frac{2508}{20111}*x_{11}$}\\
{\footnotesize $[v_2, w_{14}]= -\frac{60}{91}*x_9$}&{\footnotesize $[v_6, w_9]= \frac{55}{13}*x_8$}&{\footnotesize $[v_{10}, w_8]= -\frac{165}{364}*x_{11}$}&{\footnotesize $[v_7, z_6]= -\frac{3762}{20111}*x_{12}$}\\
{\footnotesize $[v_2, w_{15}]= -\frac{3}{7}*x_{10}$}&{\footnotesize $[v_6, w_{10}]= \frac{55}{13}*x_9$}&{\footnotesize $[v_{10}, w_9]= -\frac{55}{91}*x_{12}$}&{\footnotesize $[v_8, z_0]= \frac{57}{11492}*x_7$}\\
{\footnotesize $[v_2, w_{16}]= -\frac{45}{364}*x_{11}$}&{\footnotesize $[v_6, w_{11}]= \frac{31}{13}*x_{10}$}&{\footnotesize $[v_{10}, w_{10}]= -\frac{11}{14}*x_{13}$}&{\footnotesize $[v_8, z_1]= \frac{1083}{40222}*x_8$}\\
{\footnotesize $[v_2, w_{17}]= \frac{12}{91}*x_{12}$}&{\footnotesize $[v_6, w_{12}]= -\frac{49}{26}*x_{11}$}&{\footnotesize $[v_{10}, w_{11}]= -x_{14}$}&{\footnotesize $[v_8, z_2]= \frac{855}{11492}*x_9$}\\
{\footnotesize $[v_2, w_{18}]= \frac{3}{14}*x_{13}$}&{\footnotesize $[v_6, w_{13}]= -\frac{119}{13}*x_{12}$}&{\footnotesize $[v_0, z_1]= -\frac{209}{442}*x_0$}&{\footnotesize $[v_8, z_3]= \frac{2565}{20111}*x_{10}$}\\
{\footnotesize $[v_3, w_4]= -35*x_0$}&{\footnotesize $[v_6, w_{14}]= -20*x_{13}$}&{\footnotesize $[v_0, z_2]= -\frac{1045}{6188}*x_1$}&{\footnotesize $[v_8, z_4]= \frac{9405}{80444}*x_{11}$}\\
{\footnotesize $[v_3, w_5]= -23*x_1$}&{\footnotesize $[v_6, w_{15}]= -35*x_{14}$}&{\footnotesize $[v_0, z_3]= -\frac{1045}{20111}*x_2$}&{\footnotesize $[v_8, z_5]= -\frac{1881}{40222}*x_{12}$}\\
{\footnotesize $[v_3, w_6]= -\frac{179}{13}*x_2$}&{\footnotesize $[v_7, w_0]= -x_0$}&{\footnotesize $[v_0, z_4]= -\frac{1045}{80444}*x_3$}&{\footnotesize $[v_8, z_6]= -\frac{1881}{6188}*x_{13}$}\\
{\footnotesize $[v_3, w_7]= -\frac{635}{91}*x_3$}&{\footnotesize $[v_7, w_1]= \frac{5}{7}*x_1$}&{\footnotesize $[v_0, z_5]= -\frac{95}{40222}*x_4$}&{\footnotesize $[v_9, z_0]= \frac{57}{40222}*x_8$}\\
{\footnotesize $[v_3, w_8]= -\frac{209}{91}*x_4$}&{\footnotesize $[v_7, w_2]= \frac{113}{91}*x_2$}&{\footnotesize $[v_0, z_6]= -\frac{19}{80444}*x_5$}&{\footnotesize $[v_9, z_1]= \frac{209}{20111}*x_9$}\\
{\footnotesize $[v_3, w_9]= \frac{55}{91}*x_5$}&{\footnotesize $[v_7, w_3]= \frac{83}{91}*x_3$}&{\footnotesize $[v_1, z_0]= \frac{209}{442}*x_0$}&{\footnotesize $[v_9, z_2]= \frac{95}{2366}*x_{10}$}\\
{\footnotesize $[v_3, w_{10}]= \frac{187}{91}*x_6$}&{\footnotesize $[v_7, w_4]= \frac{5}{91}*x_4$}&{\footnotesize $[v_1, z_1]= -\frac{209}{1547}*x_1$}&{\footnotesize $[v_9, z_3]= \frac{2090}{20111}*x_{11}$}\\
{\footnotesize $[v_3, w_{11}]= \frac{31}{13}*x_7$}&{\footnotesize $[v_7, w_5]= -x_5$}&{\footnotesize $[v_1, z_2]= -\frac{1045}{5746}*x_2$}&{\footnotesize $[v_9, z_4]= \frac{1045}{5746}*x_{12}$}\\
{\footnotesize $[v_3, w_{12}]= \frac{25}{13}*x_8$}&{\footnotesize $[v_7, w_6]= -\frac{25}{13}*x_6$}&{\footnotesize $[v_1, z_3]= -\frac{2090}{20111}*x_3$}&{\footnotesize $[v_9, z_5]= \frac{209}{1547}*x_{13}$}\\
{\footnotesize $[v_3, w_{13}]= x_9$}&{\footnotesize $[v_7, w_7]= -\frac{31}{13}*x_7$}&{\footnotesize $[v_1, z_4]= -\frac{95}{2366}*x_4$}&{\footnotesize $[v_9, z_6]= -\frac{209}{442}*x_{14}$}\\
{\footnotesize $[v_3, w_{14}]= -\frac{5}{91}*x_{10}$}&{\footnotesize $[v_7, w_8]= -\frac{187}{91}*x_8$}&{\footnotesize $[v_1, z_5]= -\frac{209}{20111}*x_5$}&{\footnotesize $[v_{10}, z_0]= \frac{19}{80444}*x_9$}\\
{\footnotesize $[v_3, w_{15}]= -\frac{83}{91}*x_{11}$}&{\footnotesize $[v_7, w_9]= -\frac{55}{91}*x_9$}&{\footnotesize $[v_1, z_6]= -\frac{57}{40222}*x_6$}&{\footnotesize $[v_{10}, z_1]= \frac{95}{40222}*x_{10}$}\\
{\footnotesize $[v_3, w_{16}]= -\frac{113}{91}*x_{12}$}&{\footnotesize $[v_7, w_{10}]= \frac{209}{91}*x_{10}$}&{\footnotesize $[v_2, z_0]= \frac{1881}{6188}*x_1$}&{\footnotesize $[v_{10}, z_2]= \frac{1045}{80444}*x_{11}$}\\
{\footnotesize $[v_3, w_{17}]= -\frac{5}{7}*x_{13}$}&{\footnotesize $[v_7, w_{11}]= \frac{635}{91}*x_{11}$}&{\footnotesize $[v_2, z_1]= \frac{1881}{40222}*x_2$}&{\footnotesize $[v_{10}, z_3]= \frac{1045}{20111}*x_{12}$}\\
{\footnotesize $[v_3, w_{18}]= x_{14}$}&{\footnotesize $[v_7, w_{12}]= \frac{179}{13}*x_{12}$}&{\footnotesize $[v_2, z_2]= -\frac{9405}{80444}*x_3$}&{\footnotesize $[v_{10}, z_4]= \frac{1045}{6188}*x_{13}$}\\
{\footnotesize $[v_4, w_3]= 35*x_0$}&{\footnotesize $[v_7, w_{13}]= 23*x_{13}$}&{\footnotesize $[v_2, z_3]= -\frac{2565}{20111}*x_4$}&{\footnotesize $[v_{10}, z_5]= \frac{209}{442}*x_{14}$}\\
{\footnotesize $[v_4, w_4]= 20*x_1$}&{\footnotesize $[v_7, w_{14}]= 35*x_{14}$}&{\footnotesize $[v_2, z_4]= -\frac{855}{11492}*x_5$}&{\footnotesize $$}\\
{\footnotesize $[v_4, w_5]= \frac{119}{13}*x_2$}&{\footnotesize $[v_8, w_0]= -\frac{3}{14}*x_1$}&{\footnotesize $[v_2, z_5]= -\frac{1083}{40222}*x_6$}&\\
\end{tabular}
\end{center}
\end{table}%
}

\vfill
\newpage

\section*{Acknowledgements}
The authors would like to thank Spanish Government project  MTM 2010-18370-C04-03.

\section*{References}

\end{document}